\documentclass[reqno,11pt]{amsart}
\usepackage[margin=1in]{geometry}

\usepackage{amsthm, amsmath, amssymb, bm}
\usepackage{tikz}

\usepackage{microtype} 

\usepackage[utf8]{inputenc}
\usepackage[T1]{fontenc}

\usepackage[textsize=small,backgroundcolor=orange!20]{todonotes}

\usepackage[hidelinks]{hyperref}
\usepackage{url}

\usepackage{xcolor,graphics}

\newtheorem{theorem}{Theorem}[section]

\newtheorem{lemma}[theorem]{Lemma}

\newtheorem{conjecture}[theorem]{Conjecture}
\newtheorem*{question*}{Question}

\theoremstyle{definition}

\theoremstyle{remark}
\newtheorem*{remark}{Remark}


\DeclareMathOperator{\iso}{iso}

\tikzstyle{P} = [draw, circle, black, fill, inner sep = 0pt, minimum width = 3pt]
\tikzstyle{every loop} = []
\newcommand{\tikzHind}{
  \begin{tikzpicture}[baseline, yshift=1pt]
    \path[use as bounding box] (-.15,-.1) rectangle (.6,.35);
    \draw (0.5,0) node[P] {} -- (0,0) node[P] {} edge[-,in = 45, out = 135, loop] ();
  \end{tikzpicture}
}

\title{The number of independent sets in an irregular graph}

\author[Sah]{Ashwin Sah}
\address{Massachusetts Institute of Technology, Cambridge, MA 02139, USA}
\email{asah@mit.edu}

\author[Sawhney]{Mehtaab Sawhney}
\address{Massachusetts Institute of Technology, Cambridge, MA 02139, USA}
\email{msawhney@mit.edu}

\author[Stoner]{David Stoner}
\address{Harvard University, Cambridge, MA 02138, USA}
\email{dstoner@college.harvard.edu}

\author[Zhao]{Yufei Zhao}
\address{Department of Mathematics, Massachusetts Institute of Technology, Cambridge, MA 02139, USA}
\email{yufeiz@mit.edu}
\thanks{YZ was supported by NSF Award DMS-1362326.}

\date{May 2018 (initial). Jan 2019 (revised)}

\begin{document}

\begin{abstract}
Settling Kahn's conjecture (2001), we prove the following upper bound on the number $i(G)$ of independent sets in a graph $G$ without isolated vertices:
\[
i(G) \le \prod_{uv \in E(G)} i(K_{d_u,d_v})^{1/(d_u d_v)},
\]
where $d_u$ is the degree of vertex $u$ in $G$. Equality occurs when $G$ is a disjoint union of complete bipartite graphs. The inequality was previously proved for regular graphs by Kahn and Zhao. 

We also prove an analogous tight lower bound:
\[
i(G) \ge \prod_{v \in V(G)} i(K_{d_v+1})^{1/(d_v + 1)},
\]
where equality occurs for $G$ a disjoint union of cliques. More generally, we prove bounds on the weighted versions of these quantities, i.e., the independent set polynomial, or equivalently the partition function of the hard-core model with a given fugacity on a graph.
\end{abstract}

\maketitle

\section{Introduction}

Among $d$-regular graphs on $n$ vertices, which one has the most number of independent sets? This question was initially raised by Granville in connection with problems from combinatorial number theory. It was conjectured by Alon~\cite{Alon91} and Kahn~\cite{Kahn01} that, when $n$ is divisible by $2d$, the $n$-vertex $d$-regular graph with the maximum number of independent sets is a disjoint union of complete bipartite graph $K_{d,d}$'s. The conjecture was proved by Kahn~\cite{Kahn01} for bipartite graphs using a beautiful entropy argument, and extended to all regular graphs by Zhao~\cite{Zhao10} via a combinatorial reduction to the bipartite case. Specifically, the following theorem was shown. We write $i(G)$ for the number of independent sets of a graph $G$.

\begin{theorem}[Kahn~\cite{Kahn01}, Zhao~\cite{Zhao10}] \label{thm:kahn-zhao}
	Let $G$ be an $n$-vertex $d$-regular graph. Then
	\[
	i(G) \le i(K_{d,d})^{n/(2d)} = (2^{d+1} - 1)^{n/(2d)}.
	\]
	Equality holds if and only if $G$ is a disjoint union of $K_{d,d}$'s.
\end{theorem}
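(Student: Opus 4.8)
The plan is to reproduce the two-step strategy behind Theorem~\ref{thm:kahn-zhao}: an entropy argument for the bipartite case, and a reduction of the general case to it.

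\emph{Bipartite case.} Suppose $G$ is $d$-regular with parts $L$ and $R$, so $\abs{L} = \abs{R} = n/2$, and let $\bm I$ be a uniformly random independent set of $G$, so that $\log_2 i(G) = H(\bm I)$ for $H$ the Shannon entropy. Writing $\bm I_S := \bm I \cap S$, I would split $H(\bm I) = H(\bm I_L) + H(\bm I_R \mid \bm I_L)$ by the chain rule. Because each vertex of $L$ lies in exactly $d$ of the sets $N(r)$ with $r \in R$, Shearer's entropy inequality bounds the first term by $\frac1d\sum_{r\in R} H(\bm I_{N(r)})$; subadditivity together with the fact that conditioning on the coarser random variable $\bm I_{N(r)}$ can only increase entropy bounds the second by $\sum_{r\in R} H(\one[r\in\bm I]\mid \bm I_L) \le \sum_{r\in R} H(\one[r\in\bm I]\mid\bm I_{N(r)})$. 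Hence $\log_2 i(G) \le \sum_{r\in R}\bigl(\frac1d H(\bm I_{N(r)}) + H(\one[r\in\bm I]\mid\bm I_{N(r)})\bigr)$. Now fix $r$ and set $p := \Pr[\bm I_{N(r)} = \emptyset]$. Since $\bm I_{N(r)}$ puts mass $p$ on $\emptyset$ and the remaining mass on the $2^d-1$ nonempty subsets of the $d$-set $N(r)$, and since $r\in\bm I$ forces $\bm I_{N(r)} = \emptyset$, the $r$-th summand is at most $f(p) := \frac1d\bigl(H(p) + (1-p)\log_2(2^d-1)\bigr) + p$. A short one-variable calculation shows $f$ is concave and maximized at $p = 2^d/(2^{d+1}-1)$ — exactly the empty-neighborhood probability in $K_{d,d}$ — with maximum value $\frac1d\log_2(2^{d+1}-1) = \frac1d\log_2 i(K_{d,d})$; summing over the $n/2$ vertices of $R$ gives $i(G) \le (2^{d+1}-1)^{n/(2d)}$.

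\emph{General case.} For arbitrary $d$-regular $G$ I would pass to the bipartite double cover $G\times K_2$, which is $d$-regular and bipartite on $2n$ vertices, and use the inequality $i(G)^2 \le i(G\times K_2)$. Identifying an independent set of $G\times K_2$ with a pair $(A,B)$ of vertex subsets of $G$ with no $G$-edge between them, one proves this by exhibiting an injection from $\mathcal I(G)\times\mathcal I(G)$ into the set of such pairs via a vertex-by-vertex swapping procedure that clears the edges a pair of independent sets may span, with equality precisely when $G$ is bipartite. Applying the bipartite bound to $G\times K_2$ then gives $i(G)^2 \le i(G\times K_2) \le (2^{d+1}-1)^{2n/(2d)}$, and taking square roots yields the theorem. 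For the equality case I would trace equality through both steps: equality in $i(G)^2 \le i(G\times K_2)$ forces $G$ bipartite (so $G\times K_2 = G\sqcup G$), and then equality in Shearer's inequality plus $p = 2^d/(2^{d+1}-1)$ at every vertex forces each component of $G$ to be $K_{d,d}$.

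I expect the main obstacle to lie in the combinatorial inequality $i(G)^2 \le i(G\times K_2)$ — building the swapping injection and settling its equality case — and in the equality analysis for the bipartite bound; the entropy optimization, once Shearer's lemma is set up, reduces to a routine concavity check.
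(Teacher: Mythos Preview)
Your proposal is correct and reproduces the original Kahn--Zhao argument that the paper \emph{cites} for Theorem~\ref{thm:kahn-zhao}: Kahn's entropy proof via Shearer's inequality for the bipartite case, followed by Zhao's inequality $i(G)^2 \le i(G\times K_2)$ to remove the bipartite hypothesis. The paper, however, does not reprove Theorem~\ref{thm:kahn-zhao} this way; it instead obtains the statement as the regular special case of Theorem~\ref{thm:upper} (via Theorems~\ref{thm:upper-weighted} and~\ref{thm:upper-biweighted}), and the method there is genuinely different from yours. Both approaches share the bipartite double cover reduction, but for the bipartite step the paper replaces entropy entirely by an induction on $|V(G)|$: pick a maximum-degree vertex $w$, use the recursion $i(G) = i(G-w) + i(G-w-N(w))$, and show that the target upper bound $j(G)$ satisfies the reverse recursive inequality $j(G-w) + j(G-w-N(w)) \le j(G)$; after cancelling factors beyond distance~$3$ from $w$, this becomes a local inequality established by two applications of H\"older and several calculus lemmas (Appendix). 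Your entropy route is cleaner and more conceptual in the regular case, but --- as the paper explicitly remarks --- no one has managed to apply Shearer's inequality losslessly when $G$ is irregular; the paper's recursion-based method is more technical but is precisely what lets it handle irregular graphs and thereby settle Kahn's conjecture.
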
 

Note that $i(G \sqcup H) = i(G) i(H)$, where $G \sqcup H$ denotes a disjoint union of two graphs. If we exponentially normalize the number of independent sets as $i(G)^{1/|V(G)|}$, then the theorem says that among $d$-regular graphs, this quantity is maximized by $G = K_{d,d}$, as well as disjoint unions of copies of $K_{d,d}$.

As many interesting combinatorial problems can be phrased in terms of independent sets in graphs and hypergraphs, the problem of bounding the number of independent sets is of central interest. For example, see the ICM 2018 survey~\cite{BMS18} on the recent breakthroughs on the hypergraph container method of Balogh, Morris, and Samotij~\cite{BMS15} and independently Saxton and Thomason~\cite{ST15}, which built partly on the earlier work by Sapozhenko~\cite{Sap01}, a precursor to Theorem~\ref{thm:kahn-zhao}, giving a weaker upper bound for $i(G)$.

Recently, Davies,  Jenssen, Perkins, and Roberts~\cite{DJPR1} proved a strengthening of Theorem~\ref{thm:kahn-zhao} using a novel technique they called the ``occupancy method'', which has also been applied to other settings such as matchings, colorings, and Euclidean sphere packings~\cite{DJPR1,DJPR2,DJPR3,JJP1,JJP2}. See the recent survey~\cite{Zhao17} for an overview of related developments.

Kahn~\cite{Kahn01} conjectured an extension of Theorem~\ref{thm:kahn-zhao} to not necessarily regular graphs, where the conjectured maximizer is also a disjoint union of complete bipartite graphs $K_{a,b}$'s, where $a,b$ may differ for each component. Specifically, it was conjectured that for a graph $G$ without isolated vertices (i.e., degree-0 vertices),
\[
i(G) \le \prod_{uv \in E(G)} i(K_{d_u,d_v})^{1/(d_u d_v)},
\]
where $d_u$ is the degree of vertex $u$ in $G$.

The conjecture can be rephrased in terms of the following extremal problem. Let the \emph{degree-degree distribution} of $G$ be the probability distribution of the unordered pair $\{d_u, d_v\}$ as $uv$ ranges uniformly over edges of $G$. An example of a degree-degree distribution is that $20\%$ of edges have one endpoint having degree 2 and the other degree 3, $30\%$ of edges have $(3,3)$, and $50\%$ of edges have $(3,4)$. What the maximum of $i(G)^{1/v(G)}$ over all graphs $G$ with a given degree-degree distribution? Kahn's conjecture states that the maximum is attained when $G$ is a disjoint union of complete bipartite graphs with the prescribed degree-degree distribution of edges.

Galvin and Zhao~\cite{GZ11} gave a computer-assisted proof of the conjecture when the maximum degree of $G$ is at most 5. It is not known if the recent occupancy method~\cite{DJPR1} can be extended to irregular graphs, as there appear to be some fundamental obstacles.

Our main result, below, proves Kahn's conjecture, thereby generalizing Theorem~\ref{thm:kahn-zhao} to irregular graphs. 

\begin{theorem} \label{thm:upper}
	Let $G$ be a graph without isolated vertices. Let $d_v$ the degree of vertex $v$ in $G$. Then
	\[
	i(G) \le \prod_{uv \in E(G)} i(K_{d_u,d_v})^{1/(d_u d_v)} 
	= \prod_{uv \in E(G)} (2^{d_u} + 2^{d_v} - 1)^{1/(d_ud_v)}.
	\]
	Equality holds if and only if $G$ is a disjoint union of complete bipartite graphs.
\end{theorem}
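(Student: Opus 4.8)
The plan is to prove the stronger weighted statement: for every graph $G$ without isolated vertices and every fugacity $\lambda>0$,
\[
Z_G(\lambda)\ \le\ \prod_{uv\in E(G)}Z_{K_{d_u,d_v}}(\lambda)^{1/(d_ud_v)},
\]
where $Z_H(\lambda)=\sum_I\lambda^{|I|}$ is summed over independent sets $I$ of $H$, so that $Z_{K_{d,d'}}(\lambda)=(1+\lambda)^d+(1+\lambda)^{d'}-1$ and $\lambda=1$ recovers Theorem~\ref{thm:upper}. The first step is to reduce to bipartite graphs: the tensor product $G\times K_2$ (the bipartite double cover) is bipartite, each vertex $(v,i)$ has degree $d_v$, every edge of $G\times K_2$ inherits its endpoint-degree pair from an edge of $G$ (and there are exactly twice as many edges), and a fugacity-weighted form of Zhao's bipartite swapping trick — provable by the same combinatorial argument — gives $Z_{G\times K_2}(\lambda)\ge Z_G(\lambda)^2$. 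Thus the target product for $G\times K_2$ is the square of the one for $G$, so the bipartite case implies the general one; and since $G\times K_2$ is a disjoint union of two copies of $G$ precisely when $G$ is bipartite, combining this with the bipartite equality case returns ``$G$ is a disjoint union of complete bipartite graphs''.

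For bipartite $G=(A\sqcup B,E)$ without isolated vertices I would run an entropy argument in the spirit of Kahn and Galvin--Tetali. Draw $\mathbf I$ from the hard-core measure, so $\log Z_G(\lambda)=H(\mathbf I)+(\log\lambda)\sum_v\Pr[v\in\mathbf I]$, and split $H(\mathbf I)=H(\mathbf I_A)+H(\mathbf I_B\mid\mathbf I_A)$ with $\mathbf I_S:=\mathbf I\cap S$. Bound $H(\mathbf I_B\mid\mathbf I_A)\le\sum_{b\in B}H(\mathbf I_b\mid\mathbf I_{N(b)})$ and evaluate each term exactly from the spatial Markov property (conditioned on $\mathbf I_{N(b)}$, $\mathbf I_b$ is $\mathrm{Bernoulli}(\tfrac\lambda{1+\lambda})$ when $\mathbf I_{N(b)}=\emptyset$ and $0$ otherwise, and $\Pr[b\in\mathbf I]=\tfrac\lambda{1+\lambda}\Pr[\mathbf I_{N(b)}=\emptyset]$), bound $H(\mathbf I_A)$ by the fractional Shearer inequality for the cover $\{N(b)\}_{b\in B}$ of $A$ with weights $(w_b)$ satisfying $\sum_{b\ni a}w_b\ge1$, and use the same weights to redistribute $(\log\lambda)\sum_{a\in A}\Pr[a\in\mathbf I]$ over $B$. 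This displays $\log Z_G(\lambda)$ as a sum over $b\in B$ of a local functional of the law of $(\mathbf I_{N(b)},\mathbf I_b)$ depending on that law only through $H(\mathbf I_{N(b)})+(\log\lambda)\EE|\mathbf I_{N(b)}|$ and $\Pr[\mathbf I_{N(b)}=\emptyset]$; a concave one-parameter maximization caps the $b$-term by $w_b\log\!\big((1+\lambda)^{1/w_b}+(1+\lambda)^{d_b}-1\big)$, and the whole inequality reduces to the local claim that, for every $b$,
\[
g_{d_b}(w_b)\ \le\ \frac1{d_b}\sum_{a\in N(b)}g_{d_b}(1/d_a),\qquad\text{where}\quad g_D(x):=x\log\!\big((1+\lambda)^{1/x}+(1+\lambda)^{D}-1\big),
\]
so that $g_D(1/d)=\tfrac1d\log Z_{K_{d,D}}(\lambda)$ and the right-hand side is exactly the edge budget $\sum_{a\in N(b)}\frac{\log Z_{K_{d_a,d_b}}(\lambda)}{d_ad_b}$.

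The crux — and the reason the irregular case is genuinely harder than the regular one — is that the local claim must hold for a single \emph{globally} admissible weighting, and the obvious choice fails: at a low-degree vertex the covering constraint $\sum_{b\ni a}w_b\ge 1$ forces some $w_b$ to be large (e.g.\ $w_b\ge1$ when $\deg a=1$), and then if $b$ also has a high-degree neighbour, $g_{d_b}(w_b)$ overshoots the average on the right. This is exactly the coupling between local constraints that obstructs the occupancy method and that earlier work could only circumvent up to maximum degree $5$, by computer. Overcoming it needs a more careful accounting — for instance, splitting each vertex's contribution between its incident edges and its neighbourhood and using a refined, edge-weighted fractional Shearer step, so that low-degree vertices are paid for within their own edge budgets — together with the real analytic heart of the matter: a convexity (log-supermodularity) lemma for $d\mapsto\log Z_{K_{d,d'}}(\lambda)$ and its fractional interpolant $g_{d'}$, ensuring they behave correctly under averaging over mixed degrees. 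I expect this lemma to be the main obstacle, because $g_{d'}$ is nearly linear over wide parameter ranges, so the required estimate on its second derivative must be pushed through carefully and uniformly in $\lambda$ and $d'$. The equality case then follows by tracing tightness through the Shearer step, through each one-parameter maximization (forcing $\mathbf I_{N(b)}$ to be $\lambda$-weighted uniform with the extremal empty-probability), and through the convexity step (forcing all neighbours of each $b$ to share a common degree), so every component of $G$ is complete bipartite; conversely those graphs attain equality by direct computation.
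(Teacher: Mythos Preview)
Your reduction to bipartite graphs via the double cover is correct and matches the paper. But your plan for the bipartite case diverges sharply from the paper's and, as you yourself flag, has a genuine gap at its core. The paper does \emph{not} use an entropy/Shearer argument; the introduction explicitly notes that it remains unclear how to apply Shearer's inequality in a lossless way in the irregular case, despite previous attempts. Your proposal runs straight into this wall. The local claim $g_{d_b}(w_b)\le \tfrac1{d_b}\sum_{a\in N(b)}g_{d_b}(1/d_a)$ for a single globally admissible weighting $(w_b)$ is precisely the step that has resisted proof; your suggested fixes---a ``refined edge-weighted fractional Shearer step'' and a ``log-supermodularity lemma'' for $g_D$---are hopes rather than arguments, and you concede the convexity step is ``the main obstacle'' without resolving it. Concretely: the covering constraint $\sum_{b\ni a}w_b\ge1$ forces $w_b\ge1$ at any $b$ with a degree-one neighbour, while the right-hand side of your local claim can be much smaller than $g_{d_b}(1)$ when $b$ also has high-degree neighbours; convexity of $g_D$ alone cannot bridge that, since Jensen only compares $g_D$ at the \emph{average} of the $1/d_a$ to the average of the $g_D(1/d_a)$, not $g_D$ at an unrelated admissible $w_b$.

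The paper's actual route is completely different: induction on $|V(G)|$ via the recursion $P_G=P_{G-w}+\lambda P_{G-w-N(w)}$ at a maximum-degree vertex $w$. After cancelling the factors common to all three terms (those coming from edges far from $w$), one is left with a purely local inequality involving only the layers $V_0,V_1,V_2,V_3$ around $w$. That inequality is then dispatched by two carefully aimed applications of H\"older's inequality together with two analytic lemmas proved in the appendix---a monotonicity statement for a ratio of $P_{K_{a,b}}$-type quantities, and a final single-edge inequality. The delicate analysis is still present, but it is confined to a concrete, provable inequality at each edge, decoupled from any global weighting, rather than the coupled system across all of $B$ that your Shearer scheme produces.
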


\begin{remark}
A vertex version of this inequality, i.e., $i(G) \le \prod_{v \in V(G)} i(K_{d_v,d_v})^{1/(2d_v)}$, is false, e.g., for a path on 4 vertices, as $8 \not\le \sqrt{63}$.
\end{remark}

Kahn's proof~\cite{Kahn01} of the bipartite case of Theorem~\ref{thm:kahn-zhao} made clever use of Shearer's entropy inequality~\cite{CFGS86}. It remains unclear how to apply Shearer's inequality in a lossless way in the irregular case, despite previous attempts to do so, e.g., \cite{MT10}. Kahn's entropy proof was later generalized to the weighted setting (see \eqref{eq:upper-reg-weighted} below) by Galvin and Tetali~\cite{GT04}, as well as more generally to graph homomorphisms (also see \cite{Gal14}), though the entropy proof remained the only approach known until Lubetzky and Zhao~\cite{LZ14} gave a ``one-line'' proof via H\"older's inequality, which can be viewed as a re-interpretation of Kahn's entropy proof (see \cite{Fri04} for a discussion relating Shearer's inequality to H\"older's inequality). Still, the H\"older's inequality method in \cite{LZ14} could not handle irregular graphs. Our new result in this paper hints at the possibility of a powerful new ``non-uniform H\"older's inequality'' that could have much wider applications, though we do not speculate here on the exact form of such a more general inequality.

\bigskip

We also prove an analogous but somewhat easier lower bound. The number of independent sets, exponentially normalized as $i(G)^{1/|V(G)|}$, is known to be minimized among $d$-regular graphs by $G = K_{d+1}$.

\begin{theorem}[Cutler and Radcliffe~\cite{CR14}] \label{thm:lower-reg}
	Let $G$ be an $n$-vertex $d$-regular graph. Then
	\[
		i(G) \ge i(K_{d+1})^{n/(d+1)} = (d+2)^{n/(d+1)}.
	\]
	Equality holds if and only if $G$ is a disjoint union of $K_{d+1}$'s.
\end{theorem}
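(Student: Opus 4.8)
The plan is to deduce Theorem~\ref{thm:lower-reg} from the following stronger statement, valid for \emph{every} graph $G$ (now isolated vertices are allowed):
\[
i(G) \ge \prod_{v \in V(G)} (d_v + 2)^{1/(d_v+1)},
\]
with equality if and only if $G$ is a disjoint union of cliques. When $G$ is $d$-regular this reads $i(G) \ge (d+2)^{n/(d+1)} = i(K_{d+1})^{n/(d+1)}$, and ``disjoint union of cliques'' specializes to ``disjoint union of $K_{d+1}$'s'', since $K_a$ is $(a-1)$-regular; one checks directly that $i(K_{a_1} \sqcup \cdots \sqcup K_{a_k}) = \prod_j (a_j+1) = \prod_v (d_v+2)^{1/(d_v+1)}$, so the characterization is the natural one. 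I would prove the displayed inequality by induction on $|V(G)|$, the cases $|V(G)| \le 1$ being trivial. For the inductive step, pick a vertex $v$ of \emph{maximum} degree $m := d_v = \Delta(G)$ and use the fundamental recursion
\[
i(G) = i(G - v) + i(G - N[v]), \qquad N[v] := \{v\} \cup N(v),
\]
applying the inductive hypothesis to the two smaller graphs on the right.

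Set $Q := \prod_{w \notin N[v]} (d_w + 2)^{1/(d_w+1)}$. In $G - v$ the degree of each neighbour $w \in N(v)$ drops to $d_w - 1$ and no other degree changes, so the inductive hypothesis gives $i(G-v) \ge Q \prod_{w \in N(v)} (d_w+1)^{1/d_w}$. In $G - N[v]$ every surviving vertex has degree at most its $G$-degree, and since $t \mapsto (t+2)^{1/(t+1)}$ is decreasing on $[0,\infty)$, the inductive hypothesis gives $i(G - N[v]) \ge Q$. Substituting and dividing by $Q$, the theorem reduces to the elementary inequality
\[
1 + \prod_{w \in N(v)} (d_w+1)^{1/d_w} \ \ge\ (m+2)^{1/(m+1)} \prod_{w \in N(v)} (d_w+2)^{1/(d_w+1)};
\]
call this inequality $(\star)$. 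Its crucial feature is that $1 \le d_w \le m$ for every $w \in N(v)$, because $v$ has maximum degree. Choosing a maximum-degree vertex is essential: for a leaf $v$ adjacent to a degree-$2$ vertex the analogue of $(\star)$ is false, since $1 + \sqrt{3} < \sqrt{3}\cdot 4^{1/3}$, so the recursion cannot be applied at such a vertex.

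Proving $(\star)$ — and showing that it holds with equality precisely when every $w \in N(v)$ has degree exactly $m$ — is the main obstacle; everything else is bookkeeping. I would establish $(\star)$ by real-variable calculus: regard both sides as functions of $(d_w)_{w \in N(v)} \in [1,m]^m$ and show that the minimum of (left side) $-$ (right side) over this cube is attained at the corner $d_w \equiv m$, where both sides equal $m+2$. Concretely, either show that along each coordinate this difference is nonincreasing on $[1,m]$ — which after taking logarithmic derivatives reduces to an elementary comparison of $\frac{d}{dt}[t^{-1}\log(t+1)]$ with $\frac{d}{dt}[(t+1)^{-1}\log(t+2)]$, using $d_w \le m$ — or first reduce by a convexity argument to configurations with every $d_w \in \{1,m\}$ and then check the resulting one-parameter family by hand. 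Strictness away from $d_w \equiv m$ then gives the equality analysis for $(\star)$.

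Finally I would track the equality case through the recursion. If $i(G) = \prod_v (d_v+2)^{1/(d_v+1)}$, then there is equality in the bound for $i(G-v)$ — so $G-v$ is a disjoint union of cliques; equality in $i(G-N[v]) \ge Q$ — so $G - N[v]$ is a disjoint union of cliques and, since $t \mapsto (t+2)^{1/(t+1)}$ is \emph{strictly} decreasing, no vertex outside $N[v]$ loses a neighbour, i.e.\ $N[v]$ is a connected component of $G$; and equality in $(\star)$ — so every $w \in N(v)$ has degree $m$. Hence that component has $m+1$ vertices and is $m$-regular, so it is $K_{m+1}$; removing it and invoking the inductive characterization shows $G$ is a disjoint union of cliques, and the converse is the direct computation noted above. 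Specializing to $d$-regular $G$ yields Theorem~\ref{thm:lower-reg}. I would also remark that this argument uses only the recursion and an elementary inequality — none of the entropy, Shearer, or H\"older input needed for Theorem~\ref{thm:upper} — which is the sense in which the lower bound is ``somewhat easier'', and that the same argument simultaneously proves the irregular lower bound announced in the abstract.
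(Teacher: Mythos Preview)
Your approach is essentially the paper's proof of Theorem~\ref{thm:lower-weighted} (Section~\ref{sec:lower}) specialized to $\lambda=1$: same induction, same choice of a maximum-degree vertex, same recursion, same reduction to the key inequality you call $(\star)$ (this is the paper's \eqref{eq:lower-expand-reduce} at $\lambda=1$), and the same equality analysis via strict monotonicity of $t\mapsto (t+2)^{1/(t+1)}$ forcing $V_2=\emptyset$.

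The one place where your sketch is too quick is the proof of $(\star)$. Your suggestion that the coordinate-wise monotonicity of $\mathrm{LHS}-\mathrm{RHS}$ ``reduces to an elementary comparison of $\tfrac{d}{dt}[t^{-1}\log(t+1)]$ with $\tfrac{d}{dt}[(t+1)^{-1}\log(t+2)]$'' does not go through directly: when you differentiate in $d_1$, the two derivatives come with different multiplicative factors ($\prod_{w}(d_w+1)^{1/d_w}$ versus $(m+2)^{1/(m+1)}\prod_w (d_w+2)^{1/(d_w+1)}$), and these depend on the \emph{other} coordinates, so the comparison is not a pure one-variable fact. The paper handles this by working with the ratio $f(d_1,\dots,d_m)=\mathrm{LHS}/\prod_w(d_w+2)^{1/(d_w+1)}$, and before moving $d_1$ to $m$ it first uses the elementary mediant inequality $\frac{ac+1}{bc+1}\ge\frac{ad+1}{bd+1}$ (for $a\ge b\ge 0$, $c\ge d\ge 0$) to replace the remaining $d_2,\dots,d_m$ inside the ratio by $m$; only then does the problem decouple into the single-variable inequality \eqref{eq:pf-lower-final}, proved as Lemma~\ref{lem:g-derivative} by a calculus argument. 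So your strategy is right, but the ``push each coordinate to $m$'' step needs this extra decoupling trick rather than a bare derivative comparison.
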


Our second result extends the above inequality to irregular graphs.

\begin{theorem} \label{thm:lower}
	Let $G$ be graph and $d_v$ the degree of vertex $v$ in $G$. Then
	\[i(G)\ge \prod_{v\in V(G)}(d_v+2)^{1/(d_v+1)}.\]
	Equality holds if and only if $G$ is a disjoint union of cliques.
\end{theorem}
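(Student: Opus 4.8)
The plan is to prove the stronger weighted bound for the independence polynomial (equivalently, the hard-core partition function): writing $Z_G(\lambda) = \sum_{I}\lambda^{|I|}$ with the sum over independent sets $I$ of $G$, I will show
\[
Z_G(\lambda) \ge \prod_{v\in V(G)}\bigl(1 + (d_v+1)\lambda\bigr)^{1/(d_v+1)} = \prod_{v\in V(G)} Z_{K_{d_v+1}}(\lambda)^{1/(d_v+1)}
\]
for all $\lambda \ge 0$, which at $\lambda = 1$ is Theorem~\ref{thm:lower}. It helps to allow isolated vertices, since a degree-$0$ vertex contributes the factor $1+\lambda = Z_{K_1}(\lambda)$ and $Z_{G\sqcup K_1} = (1+\lambda)Z_G$, so such vertices are absorbed for free and the inequality iterates cleanly. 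I would induct on $|V(G)|$; for the step, take a vertex $v$ of minimum degree $\delta$ (if $\delta=0$, peel it off via $Z_G=(1+\lambda)Z_{G-v}$), write $N(v)=\{w_1,\dots,w_\delta\}$ with $d_i:=d_{w_i}\ge\delta$, and use $Z_G(\lambda) = Z_{G-v}(\lambda) + \lambda Z_{G-N[v]}(\lambda)$. Apply the inductive hypothesis to both $G-v$ and $G-N[v]$: in $G-v$ the degree of each $w_i$ drops by one, and in $G-N[v]$ every remaining vertex additionally loses one for each neighbour it had in $N(v)$. Using that $x\mapsto(1+(x+1)\lambda)^{1/(x+1)}$ is decreasing (because $x\mapsto x^{-1}\log(1+x\lambda)$ is), the bounds for the two terms share the common factor $\prod_{u\notin N[v]}(1+(d_u+1)\lambda)^{1/(d_u+1)}$; after cancelling it, and after invoking the same convexity to eliminate the detailed structure of $G-N[v]$, the step reduces to a one-parameter family of analytic inequalities in $\lambda$, $\delta$, and $d_1,\dots,d_\delta$, sharp exactly when $N[v]$ is a component isomorphic to $K_{\delta+1}$.

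This local inequality is the heart of the argument and, I expect, the main obstacle. The point is that one cannot afford to discard the term $\lambda Z_{G-N[v]}(\lambda)$ or bound it trivially: already the crudest form of the step, $\prod_{i=1}^\delta(1+d_i\lambda)^{1/d_i} + \lambda \ge (1+(\delta+1)\lambda)^{1/(\delta+1)}\prod_{i=1}^\delta(1+(d_i+1)\lambda)^{1/(d_i+1)}$, is false (take $\delta=1$, $d_1=2$, $\lambda=1$, corresponding to peeling an endpoint of $P_4$). Instead one must use $\lambda Z_{G-N[v]}(\lambda)$ quantitatively — controlling how much deleting the $\delta+1$ vertices of $N[v]$ can decrease the partition function in terms of the degrees at play — and play this off against the convexity of $d\mapsto d^{-1}\log(1+d\lambda)$. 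The minimum-degree choice $d_i\ge\delta$ is what makes the ensuing real-variable estimate manageable: it pins down the signs of the terms that appear (for instance it guarantees $\sum_{i=1}^\delta\frac{\lambda}{1+(d_i+1)\lambda}+\frac{\lambda}{1+(\delta+1)\lambda}<1$), so the analysis does not split into too many cases.

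An essentially equivalent, and perhaps more transparent, route goes through the occupancy method. Differentiating $\log Z_G$ gives $\log Z_G(\lambda) = \int_0^\lambda t^{-1}\,\mathbb{E}_t[|I|]\,dt$, where $\mathbb{E}_t$ is expectation in the hard-core model on $G$ at fugacity $t$; the right-hand side of the target has the same form with $\mathbb{E}_t[|I|]$ replaced by $\sum_v\frac{t}{1+(d_v+1)t}$. So it suffices to prove the pointwise-in-$t$ inequality $\mathbb{E}_t[|I|] \ge \sum_{v}\frac{t}{1+(d_v+1)t}$, equivalently $Z_G(t)\sum_v\frac1{1+(d_v+1)t} \le \sum_v Z_{G-N[v]}(t)$, for all $t>0$. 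This statement is genuinely global — it fails vertex by vertex, since the centre of a star has $\Pr_t[v\in I] = \frac{t}{(1+t)^{d_v}+t} < \frac{t}{1+(d_v+1)t}$ — and proving it by deleting a minimum-degree vertex via the identity $\mathbb{E}_G[|I|] = \Pr_G[v\in I] + \Pr_G[v\notin I]\,\mathbb{E}_{G-v}[|I|] + \Pr_G[v\in I]\,\mathbb{E}_{G-N[v]}[|I|]$ brings one back to the same analytic inequality.

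Finally, for the equality case one tracks equality through the induction. In the step, equality in the inductive hypotheses for $G-v$ and $G-N[v]$ (inductively, these are disjoint unions of cliques), equality in the monotonicity step (so no edge leaves $N(v)$), and equality in the local inequality together force $N[v]$ to be a connected component of $G$ isomorphic to $K_{\delta+1}$. Removing it and iterating shows that equality holds in Theorem~\ref{thm:lower} if and only if $G$ is a disjoint union of cliques.
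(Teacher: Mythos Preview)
Your inductive framework and the reduction via $Z_G = Z_{G-w} + \lambda Z_{G-N[w]}$ match the paper's, but there is a genuine gap: you delete a vertex of \emph{minimum} degree, whereas the paper deletes one of \emph{maximum} degree, and this choice is decisive. Your own paragraphs expose the problem. You say that after applying monotonicity of $x\mapsto(1+x\lambda)^{1/x}$ to ``eliminate the detailed structure of $G-N[v]$'' one is left with a local analytic inequality, and then you immediately observe that the resulting ``crudest form''
\[
\prod_{i=1}^{\delta}(1+d_i\lambda)^{1/d_i} + \lambda \ \ge\ (1+(\delta+1)\lambda)^{1/(\delta+1)}\prod_{i=1}^{\delta}(1+(d_i+1)\lambda)^{1/(d_i+1)}
\]
is false (your $P_4$ example). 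But those two steps are the same step: bounding the distance-$2$ factors by $1$ via monotonicity \emph{is} the crude reduction, so the argument you outline literally arrives at an inequality you have just shown to be false. You then assert that one must use $\lambda Z_{G-N[v]}$ ``quantitatively'' without saying how; this is the missing idea, not a routine detail, and your claim that the constraint $d_i\ge\delta$ ``makes the ensuing real-variable estimate manageable'' is not substantiated by any actual estimate.

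The paper's insight is that choosing $w$ of maximum degree $\Delta$ reverses the constraint to $d_i\le\Delta$ and, crucially, makes the left-hand product carry $\Delta$ (many) factors; then the crude reduction \emph{does} yield a true inequality. After bounding each $V_2$-ratio by $1$ via the same monotonicity, one needs
\[
\prod_{i=1}^{\Delta}(1+d_i\lambda)^{1/d_i} + \lambda \ \ge\ (1+(\Delta+1)\lambda)^{1/(\Delta+1)}\prod_{i=1}^{\Delta}(1+(d_i+1)\lambda)^{1/(d_i+1)},\qquad 1\le d_i\le\Delta,
\]
which the paper proves by showing the ratio of the two sides is minimized at $d_1=\cdots=d_\Delta=\Delta$, reducing everything to a one-variable calculus lemma. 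In your $P_4$ example this means peeling the degree-$2$ vertex $b$ rather than the endpoint $a$; the corresponding instance ($\Delta=2$, $d_1=1$, $d_2=2$, $\lambda=1$) reads $2\sqrt{3}+1\ge 4^{2/3}\sqrt{3}$, which holds. The occupancy reformulation you sketch is correct but, as you yourself note, leads back to the same local obstacle and does not bypass it.
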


We also establish weighted versions of the above results. Let the \emph{independent set polynomial} of $G$ be
\[
P_G(\lambda) = \sum_{I\in \mathcal{I}(G)} \lambda^{|I|}.
\]
Here $ \mathcal{I}(G)$ denotes the set of independent sets of $G$. Note that $P_G(1) = i(G)$. This polynomial is the weighted sum over all independent sets $I$ of $G$, where the set $I$ is assigned weight $\lambda^{|I|}$. The parameter $\lambda$ is usually called \emph{fugacity}. The quantity $P_G(\lambda)$ is the partition function of the \emph{hard-core model with fugacity $\lambda$} from statistical physics, which is an important model for choosing a random independent set $I$ of $G$, where each $I$ is chosen with probability proportional to $\lambda^{|I|}$.

Theorem~\ref{thm:kahn-zhao} was extended by Galvin and Tetali~\cite{GT04} (along with the same reduction by Zhao \cite{Zhao10}) to $P_G(\lambda)$, showing that for every $n$-vertex $d$-graph graph $G$, and parameter $\lambda > 0$, we have
\begin{equation} \label{eq:upper-reg-weighted}
P_G(\lambda) \le P_{K_{d,d}}(\lambda)^{n/(2d)} = (2(1+\lambda)^d - 1)^{n/(2d)}.
\end{equation}
We extend this result to irregular graphs. Theorem~\ref{thm:upper} is the $\lambda=1$ special case of the following result.
\begin{theorem} \label{thm:upper-weighted}
	Let $G$ be graph without isolated vertices. Let $d_v$ the degree of vertex $v$ in $G$. Let $\lambda > 0$. Then
	\[
		P_G(\lambda) \le \prod_{uv \in E(G)} P_{K_{d_v,d_u}}(\lambda)^{1/(d_ud_v)} 
		= \prod_{uv \in E(G)} ((1+\lambda)^{d_u} + (1+\lambda)^{d_v}-1)^{1/(d_ud_v)}.
	\]
	Equality holds if and only if $G$ is a disjoint union of complete bipartite graphs.
\end{theorem}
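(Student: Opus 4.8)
The plan is to reduce the irregular weighted bound to a local, edge-by-edge statement and then prove that local statement by induction on the number of vertices, deleting one carefully chosen vertex at a time. Concretely, for a graph $G$ and a vertex $v$, write $G - v$ for vertex deletion and $G - N[v]$ for the deletion of $v$ together with all its neighbors. The fundamental recursion is $P_G(\lambda) = P_{G-v}(\lambda) + \lambda P_{G-N[v]}(\lambda)$, obtained by splitting independent sets according to whether they contain $v$. The idea is to set up a suitable "potential" or "profile" function $\varphi$ assigning to each vertex $v$ a quantity that captures the contribution $(1+\lambda)^{d_v}$ (roughly the local product factor around $v$), and to show that after deleting $v$ the profiles of the remaining vertices only increase, so that the inductive hypothesis applies to $G-v$ and $G-N[v]$ and the recursion closes. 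The extremal analysis — disjoint unions of complete bipartite graphs — should fall out by tracking the equality cases in each inequality used.

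I would carry this out in the following order. \textbf{Step 1:} Restate the target bound in logarithmic form, $\log P_G(\lambda) \le \sum_{uv \in E(G)} \frac{1}{d_u d_v}\log\bigl((1+\lambda)^{d_u} + (1+\lambda)^{d_v} - 1\bigr)$, and introduce the per-vertex quantity, something like assigning vertex $v$ a "weight" built from $(1+\lambda)^{d_v}$ and its neighbors' degrees, so that the right-hand side becomes $\sum_v (\text{something})$. \textbf{Step 2:} Prove a one-variable (or few-variable) analytic lemma about the function $x \mapsto \log((1+\lambda)^x + \cdots)$ — concavity, or a tangent-line/secant inequality — which is the engine that lets degrees of different sizes be compared. \textbf{Step 3:} Run the induction: pick a vertex $v$ of minimum (or maximum) degree, apply the recursion $P_G = P_{G-v} + \lambda P_{G-N[v]}$, bound each term by the inductive hypothesis on the smaller graphs, and combine using the analytic lemma from Step~2 together with a convexity/superadditivity estimate to recover the bound for $G$. \textbf{Step 4:} Chase equality through Steps~2--3 to show equality forces every component to be complete bipartite. \textbf{Step 5:} Deduce Theorem~\ref{thm:upper} by setting $\lambda = 1$.

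The main obstacle, and the genuinely new content beyond the regular case, is Step~3 in the irregular setting: when $G$ is not regular, deleting a vertex $v$ changes the degrees of its neighbors, so the "local target" around each neighbor shifts, and one must show these shifts are collectively favorable. In the regular case (Theorem~\ref{thm:kahn-zhao}) the bipartite double cover / H\"older argument of \cite{Kahn01, LZ14} sidesteps this entirely; here there seems to be no clean global symmetrization, so the heart of the proof will be a delicate quantitative inequality — the hypothetical "non-uniform H\"older's inequality" alluded to in the introduction — asserting that the product $\prod_v \varphi(d_v, \{d_u : u \sim v\})$ behaves monotonically under vertex deletion. I expect this to require isolating the right invariant (perhaps a cleverly weighted geometric mean over edges incident to $v$, or a recursively defined quantity interpolating the two deletion branches), proving a technical sublemma that it only moves in the correct direction, and handling several degenerate cases (e.g., $v$ adjacent to a degree-$1$ vertex, or components that are already complete bipartite) separately. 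The analytic lemma in Step~2 is likely routine calculus once the correct statement is identified; identifying that statement is the crux.
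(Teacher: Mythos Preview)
Your high-level plan---induction on $|V(G)|$ via the recursion $P_G = P_{G-w} + \lambda P_{G-N[w]}$, reducing to an analytic inequality that closes the recursion---is exactly the skeleton of the paper's proof. But several load-bearing pieces are missing from your proposal, and at least one of them is not something you would discover just by ``pushing the calculus.''

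\textbf{Missing: the bipartite reduction.} The paper does \emph{not} run the induction on general $G$. It first reduces to bipartite $G$ via the bipartite double cover: $P_G(\lambda)^2 \le P_{G\times K_2}(\lambda)$ (from \cite{Zhao10}), and since the right-hand side of the target inequality satisfies $j(G)^2 = j(G\times K_2)$, it suffices to prove the bound for bipartite graphs. This is not cosmetic. In a bipartite graph the distance layers $V_k$ from $w$ carry no internal edges, so the recursion localizes cleanly; in a general graph there can be edges inside $V_1$ and $V_2$, which would scramble the bookkeeping you sketch. Your proposal never mentions this step, and without it your Step~3 would have to handle a substantially harder local structure.

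\textbf{Missing: the locality observation and the choice of $w$.} You leave open whether $w$ should have minimum or maximum degree. The paper takes $w$ of \emph{maximum} degree $\Delta$, and this choice is used repeatedly in the analytic lemmas (every neighbor degree is compared against $\Delta$). More importantly, after writing out $P^+_{G-w}$ and $P^+_{G-N[w]}$, the common factor $\prod_{(u,v)\in E_{\ge 4}} (\cdots)$ cancels, so the recursive inequality \eqref{eq:upper-P-recur} depends only on the induced subgraph on $V_0\cup V_1\cup V_2\cup V_3$. This locality is what makes the problem finite (in bounded degree) and tractable; your outline does not isolate it.

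\textbf{Wrong framing: per-vertex potential.} You propose a vertex-indexed profile $\varphi(d_v,\{d_u:u\sim v\})$. The paper never leaves the edge-indexed product $\prod_{uv}((1+\lambda)^{d_u}+(1+\lambda)^{d_v}-1)^{1/(d_ud_v)}$; the inductive inequality is closed by two applications of H\"older (once in the form $a^p+b\le (a+b)^p(1+b)^{1-p}$, once in the multi-factor form $\prod a_i^{p_i}+\prod b_i^{p_i}\le\prod(a_i+b_i)^{p_i}$) together with two calculus lemmas (Lemmas~\ref{lem:f-increasing-2} and~\ref{lem:u-v-w-c-2}). There is no single monotone potential; rather, the $E_3$ contribution is bounded using one lemma, then the $E_2$ contribution is handled edge-by-edge after a H\"older regrouping. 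Your hope that ``the analytic lemma in Step~2 is likely routine calculus once the correct statement is identified'' is optimistic: Lemma~\ref{lem:u-v-w-c-2} in the appendix is a genuine two-page verification, and identifying its statement required first carrying out the H\"older manipulations above.

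In short: right strategy, but you are missing the bipartite-double-cover reduction, the maximum-degree choice, the distance-$\le 3$ localization, and the specific pair of H\"older steps that make the recursion close. These are the ideas, not just details.
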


Theorem~\ref{thm:upper-weighted} reduces to bipartite $G$ via \cite{Zhao10} as we will explain in Section~\ref{sec:strategy}. For bipartite graphs, we have the following slightly more general result that allows two different weights.

A \emph{bigraph} $G = (A, B, E)$ is a bipartite graph with a specified vertex bipartition $V(G) = A \sqcup B$ and edge set $E \subseteq A \times B$. We define the two-variable independent set polynomial of the bigraph $G$ by
\[
P_G(\lambda, \mu) = \sum_{I\in \mathcal{I}(G)} \lambda^{|I \cap A|} \mu^{|I \cap B|}.
\]
Theorem~\ref{thm:upper-weighted} has the following bivariate extension.

\begin{theorem} \label{thm:upper-biweighted}
	Let $G = (A, B, E)$ be a bigraph without isolated vertices. Let $d_v$ denote the degree of vertex $v$ in $G$. Let $\lambda, \mu > 0$. Then
	\[
		P_G(\lambda,\mu) \le \prod_{\substack{uv \in E \\ u \in A, v\in B}} ((1+\lambda)^{d_v} + (1+\mu)^{d_u}-1)^{1/(d_ud_v)}.
	\]
	Equality holds if and only if $G$ is a disjoint union of complete bipartite graphs.
\end{theorem}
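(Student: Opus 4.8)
The plan is to prove the bivariate bigraph inequality directly by a "non-uniform Hölder" style argument, generalizing the Lubetzky--Zhao proof of the regular case. First I would set up the profile/weighting framework: for a bigraph $G = (A,B,E)$ and each vertex $v$, introduce a target "local profile" $\mathbf{p}_v$ describing the conditional marginal probabilities of the hard-core measure on the star around $v$, and attempt to bound $P_G(\lambda,\mu)$ by a product over edges of the partition functions $P_{K_{d_v,d_u}}$ raised to the weight $1/(d_ud_v)$. The crucial point is that in the irregular case one cannot simply apply Hölder with a single exponent; instead I would look for a collection of edge-indexed probability weights $\{\theta_{uv}\}$ attached to each vertex (a fractional relaxation of the Shearer cover) so that $\sum_{u \sim v}\theta_{uv} = 1$ at each vertex, and then perform a sequential conditioning/splitting of the sum $\sum_{I}\lambda^{|I\cap A|}\mu^{|I\cap B|}$ vertex by vertex, applying at each step a weighted arithmetic--geometric or Hölder inequality with these $\theta$'s.

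The key technical device I expect to need is a single-variable "local" inequality: for a fixed vertex $v$ of degree $d$ with neighbors $u_1,\dots,u_d$, and given the conditional probabilities $q_1,\dots,q_d$ that each $u_j$ is "available," one must show a pointwise bound of the form
\[
\Bigl(1 + \lambda\prod_{j=1}^d q_j\Bigr) \le \prod_{j=1}^d \bigl(\text{contribution of edge } vu_j\bigr)^{1/d},
\]
matched against the analogous expansion of $P_{K_{d_v,d_u}}(\lambda,\mu) = (1+\lambda)^{d_u} + (1+\mu)^{d_v} - 1$. This is where the asymmetry between $A$-side and $B$-side weights enters, and handling it correctly — getting the exponents $1/(d_ud_v)$ to combine across the two endpoints of each edge — is the delicate bookkeeping step. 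I would phrase this local inequality as an optimization over the simplex and prove it by a convexity/Lagrange-multiplier argument, checking that the extremizer is the one corresponding to $K_{d_v,d_u}$.

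The main obstacle, and the heart of the paper, will be establishing this local/pointwise inequality with the correct non-uniform exponents — essentially identifying the right "non-uniform Hölder" statement alluded to in the introduction — and then verifying that the global induction (peeling off one vertex at a time, or splitting the graph along a vertex as in Zhao's reduction) composes these local bounds without loss. I would handle the equality case by tracking when each local inequality is tight: equality in the convexity step forces all the $q_j$ equal and equal to the value dictated by a complete bipartite component, and propagating this across a connected component forces $G$ to be a disjoint union of complete bipartite graphs. Finally, Theorems~\ref{thm:upper-weighted} and \ref{thm:upper} follow by specializing $\mu = \lambda$ (and $\lambda = 1$) and invoking Zhao's bipartite-swapping reduction \cite{Zhao10} to remove the bipartiteness hypothesis, exactly as indicated in Section~\ref{sec:strategy}.
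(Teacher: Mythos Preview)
Your proposal takes a genuinely different route from the paper, and the route you sketch is precisely the one the authors flag as \emph{not} known to work. The paper does not bound $P_G(\lambda,\mu)$ by a direct H\"older/entropy argument on the partition function. Instead it follows the Galvin--Zhao induction scheme: pick a maximum-degree vertex $w$, use the recursion
\[
P_G(\lambda,\mu) = P_{G-w}(\lambda,\mu) + \lambda\, P_{G-w-N(w)}(\lambda,\mu),
\]
apply the induction hypothesis to both pieces, and reduce everything to an inequality \emph{between the conjectured bounds} $P^+$, namely $P_G^+ \ge P_{G-w}^+ + \lambda P_{G-w-N(w)}^+$. This inequality depends only on the distance-$\le 3$ neighborhood of $w$; the bulk of the work is then a sequence of analytic manipulations (two applications of H\"older in carefully chosen forms, plus two purely analytic lemmas, Lemmas~\ref{lem:f-increasing-2} and~\ref{lem:u-v-w-c-2}) to verify that local inequality.

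Your plan, by contrast, is to find fractional Shearer weights $\theta_{uv}$ and a vertex-by-vertex ``local'' H\"older step on the sum over independent sets. The introduction of the paper explicitly says that applying Shearer's inequality losslessly in the irregular case remains unclear despite prior attempts, that the Lubetzky--Zhao H\"older method could not handle irregular graphs, and that the correct ``non-uniform H\"older's inequality'' is at best hinted at by the present result, with no speculated form. Your displayed local inequality leaves ``contribution of edge $vu_j$'' undefined, and the assertion that a convexity/Lagrange-multiplier argument will close it is exactly the missing idea---no one has identified a pointwise inequality of that shape whose product over edges recovers the exponents $1/(d_ud_v)$ with equality on $K_{a,b}$. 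So the gap is real: you have named the obstacle but not supplied the mechanism, whereas the paper sidesteps the obstacle entirely by inducting on the bound rather than on the partition function.
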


We also generalize the lower bound Theorem~\ref{thm:lower} to the independent set polynomial. Theorem~\ref{thm:lower} follows from the next result by setting $\lambda = 1$.

\begin{theorem} \label{thm:lower-weighted}
Let $G$ be a graph. Let $d_v$ denote the degree of vertex $v$ in $G$. Let $\lambda > 0$. Then
\[
P_G(\lambda) \ge \prod_{v \in V(G)} P_{K_{d_v+1}}(\lambda)^{1/(d_v+1)} = \prod_{v \in V(G)} ((d_v+1)\lambda + 1)^{1/(d_v+1)}.
\]
Equality holds if and only if $G$ is a disjoint union of cliques.
\end{theorem}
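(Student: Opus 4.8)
The plan is to prove Theorem~\ref{thm:lower-weighted} by strong induction on $|V(G)|$, pivoting on a vertex of maximum degree. Throughout abbreviate $f(d):=((d+1)\lambda+1)^{1/(d+1)}=P_{K_{d+1}}(\lambda)^{1/(d+1)}$, so that the goal is $P_G(\lambda)\ge\prod_{v\in V(G)}f(d_v)$; we will repeatedly use that $f$ is strictly decreasing on $\{0,1,2,\dots\}$ and that $d\mapsto\log f(d)$ is strictly convex (hence $f$ itself is convex), both of which follow by differentiating $t\mapsto t^{-1}\log(\lambda t+1)$ once and twice. If $G$ has no edges it is a disjoint union of copies of $K_1$ and $P_G(\lambda)=(1+\lambda)^{|V(G)|}=\prod_v f(0)$, giving equality; this is the base case. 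Otherwise fix a vertex $v$ of maximum degree $k:=\Delta(G)\ge1$, with neighbours $w_1,\dots,w_k$ and $d_i:=d_{w_i}$, so that $1\le d_i\le k$, and use the pivot identity
\[
P_G(\lambda)=P_{G-v}(\lambda)+\lambda\,P_{G-N[v]}(\lambda),
\]
where $N[v]$ denotes the closed neighbourhood of $v$.

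Both $G-v$ and $G-N[v]$ have fewer vertices, so the inductive hypothesis applies; reading off their degree sequences it gives $P_{G-v}(\lambda)\ge\prod_{w\notin N[v]}f(d_w)\cdot\prod_{i=1}^k f(d_i-1)$ and $P_{G-N[v]}(\lambda)\ge\prod_{w\notin N[v]}f(\deg_{G-N[v]}w)$. Substituting into the pivot identity and dividing by the positive quantity $\prod_{w\notin N[v]}f(d_w)\cdot\prod_{i=1}^k f(d_i)$, the desired bound $P_G(\lambda)\ge\prod_v f(d_v)$ follows once we show
\[
\prod_{i=1}^k\frac{f(d_i-1)}{f(d_i)}\;+\;\frac{\lambda}{\prod_{i=1}^k f(d_i)}\cdot T\;\ge\;f(k),\qquad T:=\prod_{w\notin N[v]}\frac{f(\deg_{G-N[v]}w)}{f(d_w)}.
\]
Since every vertex outside $N[v]$ has at most as many neighbours in $G-N[v]$ as in $G$, and $f$ is decreasing, each factor of $T$ is $\ge1$, so $T\ge1$; hence it suffices to prove the purely numerical claim: for all integers $1\le d_1,\dots,d_k\le k$ and all $\lambda>0$,
\[
\prod_{i=1}^k f(d_i-1)+\lambda\;\ge\;f(k)\prod_{i=1}^k f(d_i).
\]

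This scalar claim is the heart of the argument, and the step I expect to be the main obstacle. It holds with equality at the corner $d_1=\dots=d_k=k$, since there $\prod_i f(k-1)+\lambda=f(k-1)^k+\lambda=(k\lambda+1)+\lambda=(k+1)\lambda+1=f(k)^{k+1}=f(k)\prod_i f(k)$. The intended route is to show that $(d_1,\dots,d_k)\mapsto\prod_i f(d_i-1)+\lambda-f(k)\prod_i f(d_i)$ attains its minimum over the discrete box $\{1,\dots,k\}^k$ exactly at that corner, by a monotone exchange argument: replacing a single coordinate $d_j$ by $d_j+1$ never increases the functional. Verifying this requires estimating, for $d\le k$, the ratios $f(d-1)/f(d)$ (which is decreasing in $d$, hence $\ge f(k-1)/f(k)$) and the consecutive differences $f(d-1)-f(d)$ (decreasing in $d$ by convexity of $f$), and boils down to a one–parameter inequality such as the monotonicity in $k$ of $f(k)^k\bigl(f(k-1)-f(k)\bigr)$; the inputs are the identities $f(d-1)^d=d\lambda+1$, $f(d)^{d+1}=(d+1)\lambda+1$ together with Bernoulli's inequality $(1+\lambda)^k\ge1+k\lambda$. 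The constraint $d_i\le k$, which is precisely what the choice of a maximum-degree pivot provides, is genuinely needed here: the claim is already false for $k=1$, $d_1=2$, $\lambda=1$.

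Finally, for the equality characterization suppose $P_G(\lambda)=\prod_v f(d_v)$. Then all inequalities above are tight. Tightness of $T=1$ forces $\deg_{G-N[v]}w=d_w$ for every $w\notin N[v]$, i.e.\ no vertex outside $N[v]$ has a neighbour in $N(v)$; since $v$ has no neighbours outside $N[v]$ either, $N[v]$ is a union of connected components of $G$. Tightness of the scalar claim forces $d_1=\dots=d_k=k$, so inside the $(k+1)$-vertex graph $G[N[v]]$ every vertex has degree $k$, whence $G[N[v]]=K_{k+1}$. Tightness of the inductive hypothesis for $G-N[v]$ makes it a disjoint union of cliques. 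Therefore $G=K_{k+1}\sqcup(G-N[v])$ is a disjoint union of cliques, and conversely equality is immediate for every such $G$.
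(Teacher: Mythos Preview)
Your overall architecture matches the paper's proof exactly: induct on $|V(G)|$, pivot on a vertex $w$ of maximum degree $\Delta$, apply the induction hypothesis to $G-w$ and $G-w-N(w)$, use monotonicity of $t\mapsto (t\lambda+1)^{1/t}$ to discard the contribution of vertices at distance $\ge 2$ from $w$ (your factor $T\ge 1$), and reduce everything to the scalar inequality
\[
\prod_{i=1}^{\Delta} (d_i\lambda+1)^{1/d_i}+\lambda\ \ge\ ((\Delta+1)\lambda+1)^{1/(\Delta+1)}\prod_{i=1}^{\Delta}((d_i+1)\lambda+1)^{1/(d_i+1)},\qquad 1\le d_i\le \Delta.
\]
This is precisely inequality~(3.4) in the paper, and your equality analysis at the end is also essentially the paper's.

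The genuine gap is that you do not prove this scalar inequality; you only sketch a plan (``replacing a single $d_j$ by $d_j+1$ never increases the functional'') and gesture at possible ingredients. The paper's proof here is not just a matter of filling in routine estimates. It works with the \emph{ratio}
\[
F(d_1,\dots,d_\Delta)=\frac{\prod_i (d_i\lambda+1)^{1/d_i}+\lambda}{\prod_i((d_i+1)\lambda+1)^{1/(d_i+1)}}
\]
rather than the difference, and shows $F(d_1,\dots,d_\Delta)\ge F(\Delta,d_2,\dots,d_\Delta)$ by first using the elementary fact that $\frac{ac+1}{bc+1}\ge\frac{ad+1}{bd+1}$ for $a\ge b,\ c\ge d$ to replace all the \emph{other} coordinates by $\Delta$ inside the ratio, thereby reducing to a genuinely one--variable inequality
\[
(\Delta\lambda+1)^{(\Delta-1)/\Delta}(d\lambda+1)^{1/d}+\lambda\ \ge\ ((\Delta+1)\lambda+1)^{\Delta/(\Delta+1)}((d+1)\lambda+1)^{1/(d+1)},
\]
which is then proved by a careful calculus argument (Lemma~A.1). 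Your alternative route via the difference and unit increments is not obviously viable: the naive bound one gets after separating the $j$th factor, namely $f(k)\bigl(f(d)-f(d+1)\bigr)\le f(d-1)-f(d)$, fails already at $d=k$ (e.g.\ $\lambda=1$, $k=2$), so any coordinate--exchange proof of the difference version would need a substantially sharper comparison than the convexity of $\log f$ that you cite. In short, the reduction is right and matches the paper, but the heart of the argument---the scalar inequality---is left open, and the paper's ratio--then--Lemma~A.1 approach is the missing idea.
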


The proofs of all these theorems follow an induction strategy used by Galvin and Zhao \cite{GZ11}, which we outline in the next section. In \cite{GZ11} the strategy was carried out to prove the upper bound, Theorem~\ref{thm:upper}, for graphs of maximum degree at most 5 with the help of a computer. In this paper, we establish a number of analytic inequalities that allow us to prove the results without the maximum degree assumption. The proofs of some of these inequalities are fairly technical verifications, and they are deferred to the appendix.

After outlining the strategy, we prove the lower bound results, Theorems~\ref{thm:lower} and \ref{thm:lower-weighted}, in Section~\ref{sec:lower}, followed by the upper bound results, Theorem~\ref{thm:upper}, \ref{thm:upper-weighted}, and \ref{thm:upper-biweighted}, in Section~\ref{sec:upper}. Both proofs use similar ideas, but the upper bound proof is more challenging to execute.

Finally, we conclude in Section~\ref{sec:concluding} by offering some corollaries, including how to bound the number of independent sets given the degree distribution of a graph. We also give some remarks on potential applications of the method to other open problems, such as counting the number of colorings and graph homomorphisms.

\section{Proof strategy} \label{sec:strategy}

The proof proceeds by induction on the number of vertices of $G$. Let us sketch the proof of the upper bound in the unweighted setting (Theorem~\ref{thm:upper}). The strategy for the lower bound (Theorem~\ref{thm:lower}) is similar.

Let $\iso(G)$ denote the number of isolated vertices in $G$. Set
\[
j(G) := 2^{\iso(G)} \prod_{uv \in E(G)} i(K_{d_u,d_v})^{1/(d_ud_v)}.
\]
Theorem~\ref{thm:upper} then says that $i(G) \le j(G)$ for all graphs $G$. 

In~\cite{Zhao10}, Theorem~\ref{thm:kahn-zhao}, the upper bound on the number of independent sets in a regular graph, was reduced to bipartite graphs via a \emph{bipartite swapping trick} (later elaborated in~\cite{Zhao11}). It was shown that $i(G)^2 \le i(G \times K_2)$. Here $\times$ denotes the graph tensor product. The graph $G \times K_2$ is also known as the \emph{bipartite double cover} of $G$, and it has vertices $V(G) \times \{0,1\}$, and an edge between $(u,0)$ and $(v,1)$ for every $uv \in E(G)$. It is easy to see that $j(G)^2 = j(G \times K_2)$, since lifting $G$ to its bipartite double cover $G \times K_2$ preserves degrees. Thus it suffices to show that $i(G \times K_2) \le  j(G \times K_2)$, which reduces to proving $i(G) \le j(G)$ for all bipartite graphs $G$.

We use induction on the number of vertices of $G$. Also, since both $i(G)$ and $j(G)$ factor over connected components of $G$, we may assume that $G$ is connected.

The number of independent sets $i(G)$ satisfies the following easy recurrence relation. For every vertex $w$,
\[
i(G) = i(G - w) + i(G - w - N(w)),
\]
where $G-w$ denotes $G$ with the vertex $w$ deleted (along with all edges incident to $w$), and $G - w - N(w)$ denotes $G$ with $w$ and all neighbors of $w$ deleted. The recurrence relation follows from noting that $i(G-w)$ counts the number of independent sets of $G$ not containing $w$, and $i(G - w - N(w))$ counts the number of independent sets of $G$ containing $w$. Applying induction, it suffices to show that, for if $w$ is a maximum degree vertex of $G$, then
\begin{equation} \label{eq:upper-recursive}
j(G - w) + j(G - w - N(w)) \le j(G).
\end{equation}
This inequality was conjectured by Galvin and Zhao~\cite{GZ11}, with a computer-assisted proof\footnote{Ad-hoc tricks were used in \cite{GZ11} to handle maximum degree 5 graphs, due to computational limitations.} when $G$ has maximum degree at most 4. Here we prove the above inequality for all $G$ and an arbitrary maximum degree vertex $w$.

Let $V_k$ denote the the set of vertices at distance exactly $k$ from the vertex $w$. So in particular $V_0 = \{w\}$ and $V_1 = N(w)$. Since $G$ is assumed bipartite, there are no edges within each $V_k$. Let $E_k$ denote the edges between $V_{k-1}$ and $V_k$. Write $E_{\ge k} := \bigcup_{i \ge k} E_i$. We have $E = E_{\ge 1}$ since $G$ is connected. See Figure~\ref{fig:setup}.

\begin{figure}
\centering
\includegraphics{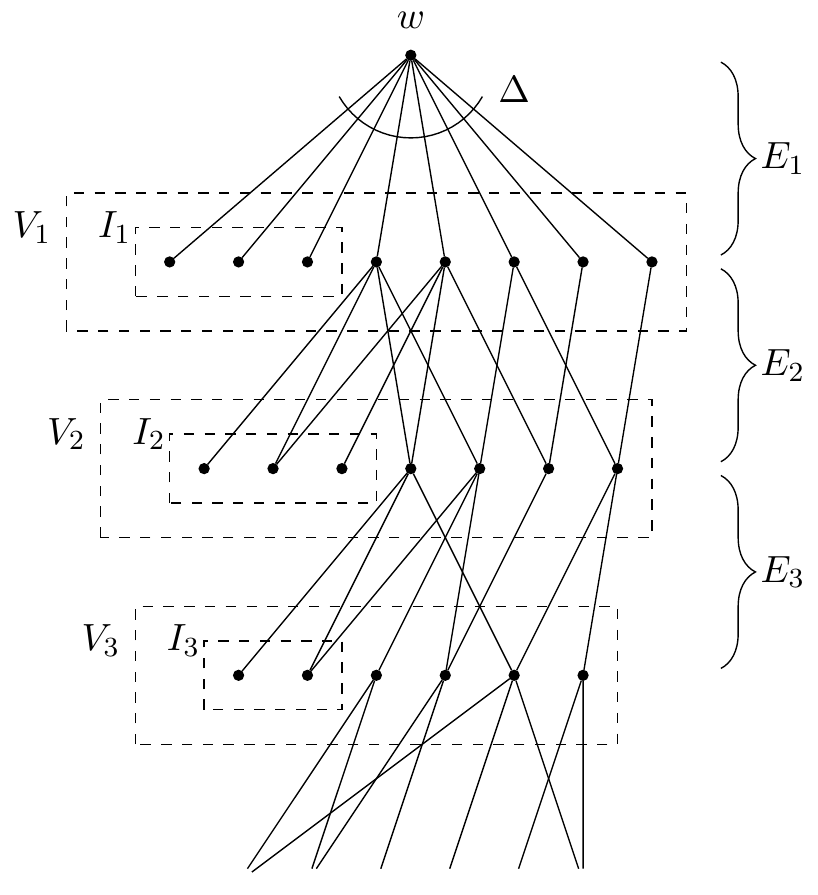}
\caption{Setup for the proofs of Theorems 1.2 and 1.4} \label{fig:setup}
\end{figure}

For each $v \in V_k$, its neighbors are contained in $V_{k-1} \cup V_{k+1}$. We write $d^+_v$ to denote the number of neighbors of $v$ contained in $V_{k+1}$. Then the terms in \eqref{eq:upper-recursive} can be written as
\begin{align*}
j(G) &= 2^{\iso(G)} \prod_{(u,v) \in E} i(K_{d_u,d_v})^{1/(d_ud_v)},
\\
j(G-w) &= 2^{\iso(G-w)}\prod_{\substack{(u,v) \in E_2 \\  v \in V_1}} i(K_{d_u,d_v^+})^{1/(d_ud_v^+)}
 \prod_{(u,v) \in E_{\ge 3}} i(K_{d_u,d_v})^{1/(d_ud_v)}, \text{ and}
\\
j(G-w - N(w)) &= 2^{\iso(G-w-N(w))}\prod_{\substack{(u,v) \in E_3 \\  u \in V_2}} i(K_{d_u^+,d_v})^{1/(d_u^+d_v)}
 \prod_{(u,v) \in E_{\ge 4}} i(K_{d_u,d_v})^{1/(d_ud_v)}.
\end{align*}
Observe that the factor $\prod_{k \ge 4} \prod_{(u,v) \in E_k} i(K_{d_u,d_v})^{1/(d_ud_v)}$ is present is all three expressions. By eliminating this common factor, we see that \eqref{eq:upper-recursive} reduces to
\begin{multline*}
2^{|I_1|}\prod_{\substack{(u,v) \in E_2 \\  v \in V_1}} i(K_{d_u,d_v^+})^{1/(d_ud_v^+)}
\prod_{(u,v) \in E_3} i(K_{d_u,d_v})^{1/(d_ud_v)}
+
2^{|I_2|} \prod_{\substack{(u,v) \in E_3 \\  u \in V_2}} i(K_{d_u^+,d_v})^{1/(d_u^+d_v)}
\\
\le \prod_{(u,v) \in E_{\le 3}} i(K_{d_u,d_v})^{1/(d_ud_v)},
\end{multline*}
where $I_k$ is the set vertices in $V_k$ that become isolated once we delete $V_{k-1}$ from $G$. In other words, $V_k$ is the set of vertices whose neighborhood is contained in $V_{k-1}$. Thus the inequality \eqref{eq:upper-recursive} only depends on the subgraph of $G$ induced by $V_0 \cup V_1 \cup V_2 \cup V_3$, which is a more tractable problem.\footnote{This is in fact a finite problem if we fix the maximum degree of $G$. This observation led to the approach in \cite{GZ11}.} We prove the above inequality by carefully analyzing the quantities $i(K_{a,b})^{1/(ab)}$, including some judicious applications of H\"older's inequality.

\section{Lower bound} \label{sec:lower}

In this section we prove Theorem~\ref{thm:lower-weighted}, which states that $P_G(\lambda) \ge P_G^-(\lambda)$, where recall $P_G(\lambda) = \sum_{I\in\mathcal{I}(G)} \lambda^{|I|}$, and we set
\[
P_G^-(\lambda) := \prod_{v \in V(G)} ((d_v+1)\lambda + 1)^{1/(d_v+1)}.
\]

We proceed by induction on the number of vertices in $G$. The case $|V(G)| = 1$ is trivial. Since $P_{G \sqcup H} (\lambda) = P_G(\lambda)P_H(\lambda)$ and $P_{G \sqcup H}^- (\lambda) = P_G^-(\lambda)P_H^-(\lambda)$, it suffices to prove the inequality when $G$ is connected. 

Suppose $G$ has maximum degree $\Delta$. Let $w$ be a vertex of degree $\Delta$. Let $V_k$ denote the set of vertices at distance exactly $k$ from $w$, e.g., $V_0 = \{w\}$ and $V_1 = N(w)$. Write $V_{\ge k} = \bigcup_{i \ge k} V_i$. Since $G$ is connected, $V(G) = V_{\ge 0}$. For $u\in V_2$, let $e_u$ be the number of its neighbors in $V_1 = N(w)$, and $f_u$ be the number of remaining neighbors, so that $e_u + f_u = d_u$. 

By considering independent sets containing $w$ versus those that do not, we obtain the recursion
\[
P_G(\lambda) = P_{G - w}(\lambda) + \lambda P_{G - w - N(w)}(\lambda).
\]
By the induction hypothesis, we have $P_{G - w}(\lambda) \ge P_{G - w}^{-}(\lambda)$ and $P_{G - w - N(w)}(\lambda) \ge P_{G - w -N(w)}^{-}(\lambda)$. Thus to prove $P_G(\lambda) \ge P_G^-(\lambda)$ it suffices to show
\begin{equation} \label{eq:lower-P-recur}
	P_{G - w}^-(\lambda) + \lambda P_{G - w - N(w)}^-(\lambda) \ge P_G^-(\lambda).
\end{equation}
We have
\[
P_{G-w}^-(\lambda) = \prod_{v\in V_1} (d_v\lambda+1)^{\frac{1}{d_v}}  \prod_{v\in V_{\ge 2}} ((d_v+1)\lambda+1)^{\frac{1}{d_v + 1}}
\]
and
\[
\lambda P_{G-w-N(w)}(\lambda) = \lambda \prod_{v\in V_2} ((f_v+1)\lambda+1)^{\frac{1}{f_v + 1}} \prod_{v\in V_{\ge 3}} ((d_v+1)\lambda+1)^{\frac{1}{d_v + 1}}.
\]
After removing the common the factor $\prod_{v\in V_{\ge 3}} ((d_v+1)\lambda+1)^{\frac{1}{d_v + 1}}$, \eqref{eq:lower-P-recur} is seen to be equivalent to
\[
\prod_{v\in V_1} (d_v\lambda+1)^{\frac{1}{d_v}} \prod_{v\in V_2} ((d_v+1)\lambda+1)^{\frac{1}{d_v + 1}} +\lambda  \prod_{v\in V_2} ((f_v+1)\lambda+1)^{\frac{1}{f_v + 1}}
\ge \prod_{v\in V_0\cup V_1\cup V_2} ((d_v+1)\lambda+1)^{\frac{1}{d_v + 1}}.
\]
On the right-hand side, the only $V_0$ contribution is $v = w$ with $d_w = \Delta$. Dividing both sides by the $V_2$ contributions, we see that the inequality is equivalent to
\begin{equation} \label{eq:lower-expand-divide}
\prod_{v\in V_1}(d_v\lambda+1)^{\frac{1}{d_v}} + \lambda \prod_{v\in V_2}\frac{((f_v+1)\lambda+1)^\frac{1}{f_v + 1}}{((d_v+1)\lambda+1)^{\frac{1}{d_v + 1}}} \ge ((\Delta+1)\lambda+1)^{\frac{1}{\Delta+ 1}}\prod_{v\in V_1} (d_v\lambda+1)^{\frac{1}{d_v}}.
\end{equation}
Observe that 
\begin{equation}
	\label{eq:lower-exp-ineq}
	(a + 1)^{1/a} > (b + 1)^{1/b} \text{ for } 0 < a < b,
\end{equation}
which follows from taking logarithms and noting that $\log (x + 1)$ is concave, so that $\log(x+1)/x$ is decreasing for $x > 0$. So $((f_v+1)\lambda+1)^\frac{1}{f_v + 1} \ge ((d_v+1)\lambda+1)^{\frac{1}{d_v + 1}}$ as $f_v\le d_v$. Thus, to prove \eqref{eq:lower-expand-divide}, it suffices to prove that
\begin{equation}
	\label{eq:lower-expand-reduce}
	\prod_{v\in V_1} (d_v \lambda + 1)^{\frac{1}{d_v}} + \lambda\ge ((\Delta + 1)\lambda + 1)^{\frac{1}{\Delta + 1}}\prod_{v\in V_1} ((d_v + 1)\lambda +1 )^{\frac{1}{d_v + 1}}.
\end{equation}
In fact, we will prove this inequality for arbitrary reals $d_v \in [1,\Delta]$ for $v \in V_1$. Recall that $|V_1| = |N(w)| = \Delta$. Let
\[
	f(d_1, \dots, d_\Delta) := \frac{\prod_{v=1}^\Delta (d_v \lambda+ 1)^{\frac{1}{d_v}} + \lambda}
{\prod_{v = 1}^\Delta ((d_v + 1)\lambda+1)^{\frac{1}{d_v + 1}}}.
\]
It suffices to show that $f(d_1,\dots, d_\Delta) \ge f(\Delta, \dots, \Delta) = ((\Delta + 1)\lambda +1)^{\frac{1}{\Delta + 1}}$ for all reals $d_1, \dots, d_\Delta \in [1,\Delta]$. 

Since $f$ is symmetric in its variables, it suffices to show $f(d_1, \dots, d_\Delta) \ge f(\Delta, d_2, \dots, d_\Delta)$ for all reals $d_2, \dots, d_\Delta \in [1,\Delta]$, so that we can iterate and replace each variable $d_v$ by $\Delta$.

By \eqref{eq:lower-exp-ineq}, we have $(d_{v} \lambda + 1)^{\frac{1}{d_{v}}}\ge (\Delta \lambda+ 1)^{\frac{1}{\Delta}}$ for each $v$. Using the fact that $a\ge b \ge 0$ and $c\ge d \ge 0$ imply $\frac{ac + 1}{bc + 1}\ge\frac{ad + 1}{bd + 1}$ (it is equivalent to $(a - b)(c - d)\ge 0$), we have
\begin{align*}
\frac{f(d_1, d_2, \dots, d_\Delta)}{f(\Delta, d_2, \dots, d_\Delta)}
&= \left( 
	\frac{
	  (d_1\lambda+1)^{\frac{1}{d_1}} 
      \prod_{v =2}^\Delta (d_v\lambda+1)^{\frac{1}{d_v}} +\lambda
     }
     {
      (\Delta\lambda+1)^{\frac{1}{\Delta}} 
      \prod_{v =2}^\Delta (d_v\lambda+1)^{\frac{1}{d_v}} +\lambda
     }
   \right)
   \left(
    \frac{((\Delta+1)\lambda+1)^{\frac{1}{\Delta+1}}}{((d_1+1)\lambda+1)^{\frac{1}{d_1+1}}}
   \right)
\\
&\ge \left( 
	\frac{
	  (d_1\lambda+1)^{\frac{1}{d_1}} 
      \prod_{v =2}^\Delta (\Delta\lambda+1)^{\frac{1}{\Delta}} +\lambda
     }
     {
      (\Delta\lambda+1)^{\frac{1}{\Delta}} 
      \prod_{v =2}^\Delta (\Delta\lambda+1)^{\frac{1}{\Delta}} +\lambda
     }
   \right)
   \left(
    \frac{((\Delta+1)\lambda+1)^{\frac{1}{\Delta+1}}}{((d_1+1)\lambda+1)^{\frac{1}{d_1+1}}}
   \right)
\\ 
&= \frac{(d_1\lambda+1)^{\frac{1}{d_1}} (\Delta\lambda+1)^{\frac{\Delta-1}{\Delta}}+\lambda}{((\Delta+1)\lambda+1)^{\frac{\Delta}{\Delta+1}}((d_1+1)\lambda+1)^{\frac{1}{d_1+1}}}.
\end{align*}
Thus it remains to prove
\begin{equation}
	\label{eq:pf-lower-final}
	(\Delta\lambda+1)^\frac{\Delta-1}{\Delta}(d \lambda+1)^\frac{1}{d}+\lambda\ge ((\Delta+1)\lambda+1)^\frac{\Delta}{\Delta+1}((d+1)\lambda+1)^\frac{1}{d+1}
\end{equation}
for $1\le d \le\Delta$, which is proved in Lemma~\ref{lem:g-derivative}. 

\smallskip

\emph{Equality conditions.} Suppose equality occurs in Theorem~\ref{thm:lower-weighted}. We still assume that $G$ is connected. Since $\lambda > 0$, Lemma~\ref{lem:g-derivative} implies that to have equality in \eqref{eq:pf-lower-final}, we must have $d = \Delta$. Therefore equality holds in $f(d_1,\dots, d_\Delta) \ge f(\Delta, \dots, \Delta)$ if and only if $d_1 = \dots = d_\Delta = \Delta$. Thus $d_v = \Delta$ for all $v \in V_1$. Since \eqref{eq:lower-exp-ineq} is strict for $a< b$, to maintain equality in reducing \eqref{eq:lower-expand-divide} to \eqref{eq:lower-expand-reduce}, we must have $f_v=d_v$ for all $v\in V_2$, but this is impossible unless $V_2$ is empty since every vertex in $V_2$ is adjacent to some vertex in $V_1$. Therefore, $V_2$ is empty, which forces $G = K_{\Delta+1}$.

The inequality is strict for all connected $G$ except for cliques. Since the inequality factors over connected components, we see that equality occurs for a general graph $G$ if and only if  $G$ is a disjoint union of cliques. 
This completes the proof of Theorem~\ref{thm:lower-weighted}.

\section{Upper bound} \label{sec:upper}

In this section we prove Theorem~\ref{thm:upper-biweighted}. Note that Theorem~\ref{thm:upper-weighted} (and hence Theorem~\ref{thm:upper}) follows by setting $\mu = \lambda$ in Theorem~\ref{thm:upper-biweighted} and using $P_G(\lambda)^2 \le P_{G \times K_2} (\lambda)^2$ from \cite{Zhao10} to reduce Theorem~\ref{thm:upper-weighted} to the bipartite setting.

For a bigraph $G = (A, B, E)$, where $E \subseteq A \times B$, recall
$ P_G(\lambda, \mu) = \sum_{I\in\mathcal{I}(G)} \lambda^{|I \cap A|} \mu^{|I \cap B|}$.
Let $\iso_A(G)$ and $\iso_B(G)$ denote the number of isolated vertices of $G$ lying in $A$ and $B$ respectively. Set
\[
P_G^+(\lambda, \mu) := (1+\lambda)^{\iso_A(G)} (1+\mu)^{\iso_B(G)} \prod_{(u,v) \in E} ((1+\mu)^{d_u} + (1+\lambda)^{d_v} - 1 )^{1/(d_ud_v)}.
\]
We use the notation convention that $u \in A$ and $v \in B$ (this is consistent with $(u,v) \in E$ as $E \subseteq A \times B$ is a set of ordered pairs). Our aim is to prove Theorem~\ref{thm:upper-biweighted}, which says that $P_G(\lambda, \mu) \le P_G^+(\lambda, \mu)$ for all bigraphs $G$ and weights $\lambda, \mu > 0$.

We use induction on the number of vertices of $G$. If $G$ has maximum degree at most 1, i.e., a union of isolated edges and vertices, then the theorem is trivial to verify.

Since both $P_G(\lambda, \mu)$ and $P_G^+(\lambda,\mu)$ factor over connected components of $G$, we may assume that $G$ is connected.

Suppose $G$ has maximum degree $\Delta \ge 2$. Let $w$ be a vertex of degree $\Delta$. Without loss of generality, assume that $w \in A$. Let $V_k$ denote the set of vertices at distance exactly $k$ from $w$, e.g., $V_0 = \{w\}$ and $V_1 = N(w)$. Write $V_{\ge k} = \bigcup_{i \ge k} V_i$. Note that $V_{2k} \subseteq A$ and $V_{2k+1} \subseteq B$. For each $i \ge 1$, define $E_i \subseteq E$ to be the set of edges of the bigraph between $V_{i-1}$ and $V_i$. Write $E_{\ge k} = \bigcup_{i \ge k} E_i$. Since $G$ is a connected, $E = E_{\ge 1}$. By considering independent sets of $G$ containing $v$ and those not containing $v$, we have
\[
P_G(\lambda, \mu) = P_{G - w}(\lambda, \mu) + \lambda  P_{G - w - N(w)}(\lambda, \mu).
\]
By induction, it suffices to prove that
\begin{equation} \label{eq:upper-P-recur}
	P_G^+(\lambda, \mu) \ge P_{G - w}^+(\lambda, \mu) + \lambda  P_{G - w - N(w)}^+(\lambda, \mu).
\end{equation}

For each $k \ge 1$, let $I_k = \{v \in V_k : N(v) \subseteq V_{k-1}\}$, i.e., the set of vertices in $V_k$ that become isolated after we remove $V_{k-1}$ from $G$. For $u\in V_2$, let $e_u$ be the number of its neighbors in $V_1$, and $f_u$ be the number of its neighbors in $V_3$, so that $e_u + f_u = d_u$. 

We have (recall we assume that $G$ is connected, so it has no isolated vertices)
\[
P_{G - w}^+(\lambda, \mu) = (1+\mu)^{|I_1|} \prod_{(u, v)\in E_2} ((1+\mu)^{d_u} + (1+\lambda)^{d_v - 1} - 1)^{\frac{1}{d_u(d_v - 1)}}\prod_{(u, v)\in E_{\ge 3}} ((1+\mu)^{d_u} + (1+\lambda)^{d_v} - 1)^{\frac{1}{d_ud_v}}
\]
and
\[
\lambda P_{G - w - N(w)}^+(\lambda, \mu) = \lambda(1+\lambda)^{|I_2|}\prod_{(u, v)\in E_3} ((1+\mu)^{f_u} + (1+\lambda)^{d_v} - 1)^{\frac{1}{f_ud_v}} \prod_{(u, v) \in E_{\ge 4}} ((1+\mu)^{d_u} + (1+\lambda)^{d_v} - 1)^{\frac{1}{d_ud_v}}.
\]
Thus \eqref{eq:upper-P-recur} expands as
\begin{multline*}
\hspace{-2em} \prod_{(u, v)\in E} ((1+\mu)^{d_u} + (1+\lambda)^{d_v} - 1)^{\frac{1}{d_ud_v}}
\\
\ge (1+\mu)^{|I_1|}\prod_{(u, v)\in E_2} ((1+\mu)^{d_u} + (1+\lambda)^{d_v - 1} - 1)^{\frac{1}{d_u(d_v - 1)}}\prod_{(u, v)\in E_{\ge 3}} ((1+\mu)^{d_u} + (1+\lambda)^{d_v} - 1)^{\frac{1}{d_ud_v}}
\\
\quad +\lambda(1+\lambda)^{|I_2|}\prod_{(u, v)\in E_3} ((1+\mu)^{f_u} + (1+\lambda)^{d_v} - 1)^{\frac{1}{f_ud_v}} \prod_{(u, v) \in E_{\ge 4}} ((1+\mu)^{d_u} + (1+\lambda)^{d_v} - 1)^{\frac{1}{d_ud_v}}.
\end{multline*}
Dividing by $\prod_{(u, v)\in E_{\ge 3}}((1+\mu)^{d_u}+(1+\lambda)^{d_v}-1)^{\frac{1}{d_ud_v}}$, the inequality is equivalent to
\begin{multline}
\prod_{(u, v)\in E_1\cup E_2} ((1+\mu)^{d_u} + (1+\lambda)^{d_v} - 1)^{\frac{1}{d_ud_v}}
\\
\ge (1+\mu)^{|I_1|}\prod_{(u, v)\in E_2} ((1+\mu)^{d_u} + (1+\lambda)^{d_v - 1} - 1)^{\frac{1}{d_u(d_v - 1)}}
\\
\quad + \lambda(1+\lambda)^{|I_2|}\prod_{(u, v)\in E_3} \frac{((1+\mu)^{f_u} + (1+\lambda)^{d_v} - 1)^{\frac{1}{f_ud_v}}}{((1+\mu)^{d_u}+(1+\lambda)^{d_v}-1)^{\frac{1}{d_ud_v}}}. \label{eq:upper-expand-divide}
\end{multline}
By Lemma~\ref{lem:f-increasing-2}, using $f_u \le d_u$ and $d_v \le \Delta$, we have
\[
\frac{((1+\mu)^{f_u} + (1+\lambda)^{d_v} - 1)^{\frac{1}{f_ud_v}}}{((1+\mu)^{d_u}+(1+\lambda)^{d_v}-1)^{\frac{1}{d_ud_v}}} \le 
\frac{((1+\mu)^{f_u} + (1+\lambda)^\Delta  - 1)^{\frac{1}{f_u\Delta}}}{((1+\mu)^{d_u}+(1+\lambda)^\Delta-1)^{\frac{1}{d_u\Delta}}},
\]
so
\begin{align*}
\prod_{(u, v)\in E_3} \frac{((1+\mu)^{f_u} + (1+\lambda)^{d_v} - 1)^{\frac{1}{f_ud_v}}}{((1+\mu)^{d_u}+(1+\lambda)^{d_v}-1)^{\frac{1}{d_ud_v}}}
&\le 
\prod_{(u, v)\in E_3} \frac{((1+\mu)^{f_u} + (1+\lambda)^{\Delta} - 1)^{\frac{1}{f_u\Delta}}}{((1+\mu)^{d_u}+(1+\lambda)^{\Delta}-1)^{\frac{1}{d_u\Delta}}}
\\
&= 
\prod_{u\in V_2 \setminus I_2}\frac{((1+\mu)^{f_u} + (1+\lambda)^{\Delta} - 1)^{\frac{1}{\Delta}}}{((1+\mu)^{d_u} + (1+\lambda)^{\Delta} - 1)^{\frac{f_u}{d_u\Delta}}},
\end{align*}
where in the last step we use that each $u \in V_2$ is contained in exactly $f_u$ edges of $E_3$. Thus, to prove \eqref{eq:upper-expand-divide}, it suffices to show
\begin{multline}
\prod_{(u, v)\in E_1\cup E_2} ((1+\mu)^{d_u} + (1+\lambda)^{d_v} - 1)^{\frac{1}{d_ud_v}}
\\
\ge (1+\mu)^{|I_1|}\prod_{(u, v)\in E_2} ((1+\mu)^{d_u} + (1+\lambda)^{d_v - 1} - 1)^{\frac{1}{d_u(d_v - 1)}} 
\\ 
+ \lambda (1+\lambda)^{|I_2|}\prod_{u\in V_2 \setminus  I_2}\frac{((1+\mu)^{f_u} + (1+\lambda)^{\Delta} - 1)^{\frac{1}{\Delta}}}{((1+\mu)^{d_u} + (1+\lambda)^{\Delta} - 1)^{\frac{f_u}{d_u\Delta}}}. \label{eq:upper-expand-divide-2}
\end{multline}

Apply H\"older's inequality in the form of $a^p + b \le (a+b)^p(1+b)^{1-p}$ for $a,b > 0$ and $p \in [0,1]$ with $a = (1+\mu)^{d_u}$, $b = (1+\lambda)^{\Delta} - 1 $ and $p = f_u/d_u$, we obtain
\begin{equation} \label{eq:upper-holder-app}
(1+\mu)^{f_u} + (1+\lambda)^{\Delta} - 1 \le \left((1+\mu)^{d_u} + (1+\lambda)^{\Delta} - 1\right)^{\frac{f_u}{d_u}} (1+\lambda)^{\Delta(1 - \frac{f_u}{d_u})}.
\end{equation}
Thus
\[
\prod_{u\in V_2 \setminus I_2}\frac{((1+\mu)^{f_u} + (1+\lambda)^{\Delta} - 1)^{\frac{1}{\Delta}}}{((1+\mu)^{d_u} + (1+\lambda)^{\Delta} - 1)^{\frac{f_u}{d_u\Delta}}}
\le
\prod_{u\in V_2 \setminus I_2} (1+\lambda)^{1 - \frac{f_u}{d_u}}.
\]
We have 
$\sum_{u \in V_2 \setminus I_2} (1 - \frac{f_u}{d_u})
= \sum_{u \in V_2 \setminus I_2} \frac{e_u}{d_u}
= \sum_{(u,v) \in E_2} \frac{1}{d_u} -  |I_2|
$
since $e_u$ is the number of edges of $E_2$ containing $u$ as an endpoint. Thus, to prove \eqref{eq:upper-expand-divide-2}, it suffices to show
\begin{multline}
\prod_{(u, v)\in E_1\cup E_2} ((1+\mu)^{d_u} + (1+\lambda)^{d_v} - 1)^{\frac{1}{d_ud_v}} 
\\ \quad \ge (1+\mu)^{|I_1|}\prod_{(u, v)\in E_2} ((1+\mu)^{d_u} + (1+\lambda)^{d_v - 1} - 1)^{\frac{1}{d_u(d_v - 1)}}  + \lambda \prod_{(u, v)\in E_2} (1+\lambda)^{\frac{1}{d_u}} \label{eq:pf-upper-1}.
\end{multline}
Let us upper bound the right-hand side by applying H\"{o}lder's inequality in the form 
\begin{equation}
	\label{eq:upper-holder}
\prod_{i = 1}^k a_i^{p_i} + \prod_{i = 1}^k b_i^{p_i} \le \prod_{i = 1}^k (a_i + b_i)^{p_i}, \quad  \text{where } \sum_{i = 1}^k p_i = 1,
\end{equation}
with the exponents $p_i$ being the summands of 
\[\frac{|I_1|}{\Delta} + \sum_{(u, v)\in E_2}\frac{1}{\Delta (d_v - 1)} =1\]
(as each $v \in V_1$ appears as an endpoint in $d_v -1$ edges of $E_2$). The right-hand-side of \eqref{eq:pf-upper-1} equals
\begin{multline}
	((1+\mu)^\Delta)^{\frac{|I_1|}{\Delta}} 
	\prod_{(u, v)\in E_2} \left(((1+\mu)^{d_u} + (1+\lambda)^{d_v - 1} - 1)^{\frac{\Delta}{d_u}}\right)^{\frac{1}{\Delta(d_v-1)}}  
	\\
	\qquad + \lambda^{\frac{|I_1|}{\Delta}} \prod_{(u, v)\in E_2} \left(\lambda (1+\lambda)^{\frac{\Delta (d_v-1)}{d_u}} \right)^{\frac{1}{\Delta (d_v-1)}}
	\\
	\le
	\left( (1+\mu)^\Delta 
	+ \lambda \right)^{\frac{|I_1|}{\Delta}} \prod_{(u, v)\in E_2} \left(((1+\mu)^{d_u} + (1+\lambda)^{d_v - 1} - 1)^{\frac{\Delta}{d_u}} + \lambda (1+\lambda)^{\frac{\Delta (d_v-1)}{d_u}}\right)^{\frac{1}{\Delta(d_v-1)}} \label{eq:pf-upper-rhs}
\end{multline}
by H\"older's inequality \eqref{eq:upper-holder}. On the other hand, the left-hand side of \eqref{eq:pf-upper-1} may be written as (recall that all edges in $E_1$ have $w$ as an endpoint)
\begin{multline}
\prod_{v \in V_1} ((1+\mu)^\Delta  + (1+\lambda)^{d_v} - 1)^{\frac{1}{\Delta d_v}}
\prod_{(u, v)\in E_2} ((1+\mu)^{d_u} + (1+\lambda)^{d_v} - 1)^{\frac{1}{d_ud_v}}
\\
\quad = ((1+\mu)^{\Delta} + \lambda)^{\frac{|I_1|}{\Delta}}\prod_{(u, v)\in E_2} ((1+\mu)^{\Delta} + (1+\lambda)^{d_v} - 1)^{\frac{1}{\Delta d_v(d_v - 1)}}((1+\mu)^{d_u} + (1+\lambda)^{d_v} - 1)^{\frac{1}{d_ud_v}}, \label{eq:pf-upper-lhs}
\end{multline}
obtained by distributing each $((1+\mu)^\Delta  + (1+\lambda)^{d_v} - 1)^{\frac{1}{\Delta d_v}}$ factor on the left-hand side evenly over all edges of $E_2$ containing $v$, noting that the exponents add up as $\frac{1}{\Delta d_v} = \sum_{u : (u,v)\in E_2} \frac{1}{\Delta d_v(d_v-1)}$ for each $v \in V_1$. It remains to show that the right-hand side of \eqref{eq:pf-upper-rhs} is at most \eqref{eq:pf-upper-lhs}, which would follow if for every $(u,v) \in E_2$,
\begin{multline*}
((1+\mu)^{d_u} + (1+\lambda)^{d_v - 1} - 1)^{\frac{\Delta}{d_u}} + \lambda (1+\lambda)^{\frac{\Delta (d_v-1)}{d_u}}
\\
\le  ((1+\mu)^{\Delta} + (1+\lambda)^{d_v} - 1)^{\frac{1}{d_v}}((1+\mu)^{d_u} + (1+\lambda)^{d_v} - 1)^{\frac{\Delta(d_v-1)}{d_ud_v}}.
\end{multline*}
By Lemma~\ref{lem:u-v-w-c-2}, this inequality holds for all reals $1 \le d_u, d_v \le \Delta$ and $\lambda,\mu > 0$.

\smallskip

\emph{Equality conditions.} Suppose equality occurs in Theorem~\ref{thm:upper-biweighted}. We still assume that $G$ is connected. Since $\lambda, \mu > 0$, Lemma~\ref{lem:u-v-w-c-2} further implies that $d_u=\Delta$ or $d_v=1$ for all $(u,v)\in E_2$. Notice that every $v$ with $(u, v)\in E_2$ has $d_v \ge 2$, so $d_u = \Delta$ for all $u \in V_2$. To have equality in \eqref{eq:upper-holder-app}, we must have $f_u\in\{0, d_u\}$ for every $u\in V_2\setminus I_2$, since to attain equality in H\"older's inequality $a^p + b\le (a + b)^p(1 + b)^{1 - p}$ with $a > 1$ and $b > 0$, we must have $p\in\{0, 1\}$. But $e_u = d_u - f_u\ge 1$ by definition of $V_2$, and thus $f_u = 0$ for all $u\in V_2\setminus I_2$, and hence $V_2 = I_2$, which implies that $G = K_{\Delta, d_v}$ for some $v \in V_1$.

The inequality is strict for all connected $G$ except for complete bipartite graphs. Since the inequality factors over connected components, we see that equality occurs for a general graph $G$ if and only if  $G$ is a disjoint union of complete bipartite graphs. This completes the proof of Theorem~\ref{thm:upper-biweighted}.

\section{Further remarks} \label{sec:concluding}

\subsection{Degree conditions}

As a corollary of our main theorems, we obtain tight bounds on the exponentially normalized number $i(G)^{1/|V(G)|}$ of independent sets of a graph $G$ subject to the degree distribution of $G$, i.e., the fraction of vertices of every degree. (The minimization problem is actually equivalent to Theorem~\ref{thm:lower-weighted}.)

Let $\lambda > 0$ and let $\bm \rho = (\rho_0, \rho_1, \dots)$ be a finitely supported sequence of nonnegative rational numbers summing to 1. Let $f_{\min}(\bm \rho; \lambda)$ and $f_{\max}(\bm \rho; \lambda)$ denote the minimum and maximum possible values, respectively, of $P_G(\lambda)^{1/|V(G)|}$, over all graphs $G$ with degree distribution $\rho$, i.e., exactly $\rho_i |V(G)|$ vertices of $G$ have degree $i$ for each $i \ge 0$.

Theorem~\ref{thm:lower-weighted} says us that the minimum possible value of $P_G(\lambda)^{1/|V(G)|}$ is attained by a disjoint union of cliques, so that
\[
f_{\min}(\bm \rho;\lambda) = \prod_{i \ge 0} P_{K_{i+1}}(\lambda)^{\frac{\rho_i}{i+1}}.
\]

Theorem~\ref{thm:upper-weighted} implies that the maximum possible value of $P_G(\lambda)^{1/|V(G)|}$ is attained by a disjoint union of complete bipartite graphs, where the vertices of largest degree are paired with the vertices of smallest degree successively in a greedy fashion, assuming that the number of vertices satisfies appropriate divisibility conditions. We give the corresponding function $f_{\max}(\bm\rho; \lambda)$ recursively. We expand the domain of $f_{\max}$ by dropping the requirement that $\bm\rho$ sums to 1. Let $\Delta(\bm \rho)$ and $\delta(\bm \rho)$ denote the largest and smallest nonzero indices in $\bm \rho$, respectively, with $\Delta(\bm\rho)=\delta(\bm \rho)=-1$ if these indices do not exist. Finally, let $\bm e_i$ denote the sequence $\bm\rho=(\rho_0, \rho_1,\dots)$ with $\rho_i = 1$ and $\rho_j=0$ for all $j \ne i$. We claim that $f_{\max}$ is given by the recursion: writing $\delta = \delta(\bm\rho)$ and $\Delta = \Delta(\bm\rho)$,
\[f_{\max}(\bm \rho; \lambda)= 
\begin{cases}
\qquad \qquad \qquad\qquad \qquad \qquad 2^{\rho_{0}}  & \text{if } \delta=-1,
\\ P_{K_{\delta, \Delta}}(\lambda)^{\rho_{\delta}/\Delta}f_{\max}(\bm \rho-\rho_{\delta}\bm e_{\delta}-\frac{\delta}{\Delta}\rho_{\delta}\bm e_{\Delta} ; \lambda) & \text{if } \delta \neq -1 \text{ and }\delta\rho_{\delta}\le \Delta \rho_{\Delta},
\\ P_{K_{\delta, \Delta}}(\lambda)^{\rho_{\Delta}/\delta}f_{\max}(\bm \rho-\frac{\Delta}{\delta}\rho_{\Delta} \bm e_{\delta}-\rho_{\Delta}\bm e_{\Delta} ; \lambda) & \text{if } \delta\neq -1 \text{ and } \Delta \rho_{\Delta}\le \delta \rho_{\delta}.
\end{cases}
\]
This recursion terminates after a finite number of steps, since the support of $\bm\rho$ becomes strictly smaller at each step.

The claim follows from Theorem 1.5 along with the following observation. If $a < b$ and $c < d$, and $G$ contains $bd$ copies of $K_{a, c}$ and $ac$ copies of $K_{b, d}$, then by replacing them by $bc$ copies of $K_{a,d}$ and $ad$ copies of $K_{b,c}$, we never decrease $P_G(\lambda)$, as $P_{K_{a, c}}(\lambda)^{bd}P_{K_{b, d}}(\lambda)^{ac} \le P_{K_{a, d}}(\lambda)^{bc}P_{K_{b, c}}(\lambda)^{ad}$ by Lemma~\ref{lem:f-increasing-2}. Note that this operation does not change the degree distribution of the graph.

Given any $G$ that is a disjoint union of complete bipartite graphs, after taking an appropriate number of disjoint copies of $G$, we may successively apply the above operation so that, at the end of the process, we have a disjoint union of complete bipartite graphs where the edges consist of the largest degree vertices successively paired off with the smallest degree vertices. It is easy to see that there is a unique such pairing as long as the number of vertices is highly divisible (which is true as we took many disjoint copies of the graph in an earlier step), and the maximum value of $P_G(\lambda)^{1/|V(G)|}$ corresponds to the $f_{\max}$ stated above.

\medskip

A similar procedure lets us obtain the extrema for $P_G(\lambda)^{1/|V(G)|}$ subject to conditions on the minimum/average/maximum degree of $G$. The expressions are somewhat complicated, so we do not include them here.

\subsection{Bounds on independence number} We note a couple of neat corollaries. Theorem~\ref{thm:lower-weighted} says that
\[
\sum_{I\in\mathcal{I}(G)} \lambda^{|I|} \ge \prod_{v \in V(G)} ((d_v+1)\lambda + 1)^{1/(d_v+1)}.
\]
Letting $\lambda \to \infty$ and comparing the growth rate of the two sides, we obtain the following lower bound on the independence number $\alpha(G)$ (the size of the largest independent set of $G$):
\[
\alpha(G) \ge \sum_{v\in V(G)}\frac{1}{d_v+1}.
\]
This is actually the classic Caro--Wei bound \cite{Caro79,Wei81}, from which Tur\'an's theorem can be deduced by noting that the right-hand side is, by convexity, at least $|V(G)|/(\overline d + 1)$, where $\overline d$ is the average degree in $G$. The Caro--Wei bound has a short probabilistic proof (taken from~\cite{AS}): randomly order the vertices of $G$ and consider the independent set where we include a vertex if it appears before all its neighbors. The right-hand side above is the expected size of this independent set.

Similarly, starting with Theorem~\ref{thm:upper-weighted}, which says
\[
\sum_{I\in\mathcal{I}(G)} \lambda^{|I|} 
\le (1+\lambda)^{\iso(G)} \prod_{uv \in E(G)} ((1+\lambda)^{d_u} + (1+\lambda)^{d_v}-1)^{1/(d_ud_v)},
\]
and taking $\lambda \to \infty$, we have
\[
\alpha(G) \le \sum_{(u, v)\in E(G)}\frac{1}{\min (d_u, d_v)}+\iso(G).
\]
This inequality also has a quick proof: given an independent set $I$, for each $v \in I$, assign weight $1/d_v$ to all edges incident to $v$, and note that the right-hand side upper bounds the sum of the edge-weights.

\subsection{Extensions to colorings and graph homomorphisms}

Let $c_q(G)$ denote the number of $q$-colorings of a graph $G$. The following conjecture of Galvin and Tetali~\cite{GT04} remains one of the most interesting open problems on this topic.

\begin{conjecture} \label{conj:color}
	For $q\ge 3$ and $n$-vertex $d$-regular graph $G$,
	\[
		c_q(G) \le c_q(K_{d,d})^{n/(2d)}.
	\]
\end{conjecture}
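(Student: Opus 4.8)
\emph{Proof proposal.} The plan is to attack Conjecture~\ref{conj:color} by the induction-on-vertices scheme that drives Theorems~\ref{thm:upper}--\ref{thm:lower-weighted}, \emph{without} passing through a bipartite reduction. The swapping trick of \cite{Zhao10} does not extend to colorings---if it did, Conjecture~\ref{conj:color} would already follow from the bipartite case established by Galvin and Tetali \cite{GT04}---so one must run the induction directly on arbitrary, not-necessarily-bipartite $G$. (The occupancy method \cite{DJPR1} has been applied to colorings but has not, so far, closed this conjecture.) As in the present paper, I would first aim for the irregular strengthening
\[
c_q(G) \le \prod_{uv \in E(G)} c_q(K_{d_u,d_v})^{1/(d_ud_v)} \qquad \text{for all } G \text{ without isolated vertices},
\]
which recovers Conjecture~\ref{conj:color} on setting $d_u = d_v = d$; one should first sanity-check it (equality for disjoint unions of complete bipartite graphs, no low-degree counterexamples) in the spirit of the Remark following Theorem~\ref{thm:upper}.

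The engine would be the one from Section~\ref{sec:upper}: choose a maximum-degree vertex $w$, delete it, and reduce the inductive step to a local inequality supported on a bounded-radius ball about $w$. The first obstacle is that colorings lack a deletion identity as clean as $i(G) = i(G-w) + i(G-w-N(w))$, since the number of colors available at $w$ depends on how many \emph{distinct} colors appear on $N(w)$. To obtain a workable recursion I would enlarge the class of objects to \emph{list-colorings} (equivalently, partition functions of the antiferromagnetic Potts model with per-vertex external fields), which is closed under deleting $w$ and removing one color from each list on $N(w)$:
\[
c(G; \{L_v\}_v) = \sum_{c \in L_w} c\paren{G - w;\ \{L_v \setminus \{c\}\}_{v \in N(w)} \cup \{L_v\}_{v \notin N(w)}}.
\]
One then needs a list-version of the conjectured product bound, and after cancelling a factor supported far from $w$ the step should localize to the subgraph on $V_0 \cup V_1 \cup V_2 \cup \cdots$ out to bounded distance. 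Because $G$ is not bipartite here, one must allow edges inside the $V_k$ and carry substantially more bookkeeping than in Section~\ref{sec:upper}, and it is not clear the localized inequality stays simple enough to analyze.

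The final ingredient, which I expect to be the real bottleneck, is analytic: controlling $g_q(a,b) := c_q(K_{a,b})^{1/(ab)}$ the way the appendix of this paper controls $(2^a + 2^b - 1)^{1/(ab)}$. Now $c_q(K_{a,b}) = \sum_{s \ge 1} \binom{q}{s} s!\, S(a,s)\, (q-s)^b$ (with $S$ the Stirling numbers of the second kind), which is far less tractable---and not even manifestly defined for non-integer $a$, which is a genuine difficulty since the reduction, as in Sections~\ref{sec:lower}--\ref{sec:upper}, wants to treat degrees as real parameters ranging over $[1,\Delta]$. One would need monotonicity and H\"older/log-convexity statements for $g_q$ and its list analogue, uniform in $q$, to play the role of Lemma~\ref{lem:f-increasing-2} and its relatives, and then feed them into H\"older's inequality exactly as in \eqref{eq:upper-holder}. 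Establishing such properties of the coloring $K_{a,b}$-function, uniformly in $q$ and in the real degree parameters, is where genuinely new analytic input seems to be required---consistent with the conjecture remaining open despite the entropy and H\"older technology already available in the bipartite case, and with this paper's own suggestion that a more powerful ``non-uniform H\"older's inequality'' is what is ultimately needed.
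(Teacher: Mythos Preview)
The statement you are addressing is \emph{Conjecture}~\ref{conj:color}; the paper does not prove it. Section~\ref{sec:concluding} presents it explicitly as ``one of the most interesting open problems on this topic'' and only sketches a possible line of attack, together with a note added in revision that the authors settled it in the follow-up work \cite{SSSZ2}. So there is no ``paper's own proof'' to compare against---only the paper's informal discussion of strategy.

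Your proposal is essentially a fleshed-out version of that very discussion. The paper says exactly what you say: (i) formulate the irregular product-over-edges strengthening $c_q(G)\le\prod_{uv}c_q(K_{d_u,d_v})^{1/(d_ud_v)}$; (ii) since $c_q$ lacks the recursion $i(G)=i(G-w)+i(G-w-N(w))$, pass to list-colorings, which \emph{are} closed under deleting $w$ and stripping a color from each neighbor's list; (iii) run the same localize-and-induct scheme as in Section~\ref{sec:upper}. You add useful color---the explicit formula for $c_q(K_{a,b})$ via Stirling numbers, the observation that it is not obviously definable for non-integer degrees, and the identification of the missing analytic input as monotonicity/log-convexity statements for $g_q(a,b)=c_q(K_{a,b})^{1/(ab)}$ playing the role of Lemmas~\ref{lem:f-increasing-2} and \ref{lem:u-v-w-c-2}---but the skeleton is the same as the paper's.

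Where your proposal is candid, it is also correct: what you call the ``real bottleneck'' is real. The follow-up \cite{SSSZ2} that actually proves the conjecture does not proceed by finding calculus replacements for the appendix lemmas; per the authors' own note, it gives ``a more streamlined proof of Theorem~\ref{thm:upper-biweighted}, eliminating the need for the calculus verifications in the appendix.'' So the right message is that your outline matches the paper's intended approach, but closing it required a genuinely new idea (developed in \cite{SSSZ2}) that bypasses rather than generalizes the analytic verifications you flag as the obstacle.
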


Galvin and Tetali proved the result for bipartite $G$ (analogous to Kahn's \cite{Kahn01} bound on independent sets). Zhao's bipartite swapping trick~\cite{Zhao10,Zhao11} did not extend to $q$-colorings. Very recently, the $d=3$ case was proved by Davies, Jenssen, Perkins, and Roberts~\cite{DJPR3} using the occupancy method (along with a computer-aided verification), and it was later extended to $d=4$ \cite{Dav}.

To tackle this conjecture using our methods, one needs to formulate a more general conjecture, e.g., \cite{Gal06}
\[
c_q(G) \le \prod_{uv \in E(G)} c_q(K_{d_u, d_v})^{1/(d_ud_v)}.
\]
However, the number of colorings does not have the nice recursive relation $i(G) = i(G-w) + i(G-w-N(w))$ for independent sets. A natural workaround is to consider list-colorings, i.e., assign every vertex $v$ a list $L_v$ of possible remaining colors. Then there is an easy recursive relation on the number of list colorings: for each possible color assignment to $w$, delete $w$ from $G$, and remove the assigned color from the lists of the neighbors of $w$. 

More generally, Galvin and Tetali~\cite{GT04} proved that the number, $\hom(G, H)$,  of graph homomorphisms  from $G$ to $H$, where $H$ is a fixed graph allowing loops, satisfies the following inequality: for every $n$-vertex $d$-regular \emph{bipartite} graph $G$,
\[
\hom(G, H) \le \hom(K_{d,d}, H)^{n/(2d)}.
\]
This general setup includes independent sets as $\hom(G, \tikzHind) = i(G)$. It also includes $q$-colorings as $\hom(G, K_q) = c_q(G)$. The bipartite assumption on $G$ cannot be relaxed in general, for example by taking $H$ to be two looped vertices. Nonetheless, there are lots of interesting results and conjectures regarding what happens when one relaxes the bipartiteness assumption. See the survey \cite{Zhao17}.

It was conjectured \cite{CCPT} that $\hom(G, H) \le \hom(K_{d,d}, H)^{n/(2d)}$ for all \emph{triangle-free} $G$. Furthermore, as with Theorem~\ref{thm:upper}, it was conjectured~\cite{Gal06}\footnote{The triangle-free assumption was missing in \cite{Gal06}.} that for all triangle-free $G$,
\[
\hom(G, H) \le \prod_{uv \in E(G)} \hom(K_{d_u,d_v}, H)^{1/(d_ud_v)}.
\]

We believe that these conjectures are amenable to our methods. We plan to address them in a follow-up work.

\textbf{Notes added.} We proved all conjectures mentioned above in our follow-up work \cite{SSSZ2}. The methods in \cite{SSSZ2} would also give a more streamlined proof of Theorem~\ref{thm:upper-biweighted}, eliminating the need for the calculus verifications in the appendix.

\section*{Acknowledgments} We thank David Galvin and P\'eter Csikv\'ari for helpful comments on the paper.

\appendix 

\section{Some analytic inequalities} \label{sec:ineq}

This appendix contains a number of technical inequalities used in the proof of the main theorems. 

\begin{lemma}\label{lem:g-derivative}
Fix $\Delta\ge 1$ and $\lambda >0$. Then the function 
\[g(x)=(\Delta\lambda+1)^\frac{\Delta-1}{\Delta}(x\lambda+1)^\frac{1}{x}+\lambda- ((\Delta+1)\lambda+1)^\frac{\Delta}{\Delta+1}((x+1)\lambda+1)^\frac{1}{x+1}\] 
is strictly decreasing for $0 < x < \Delta$. In particular, $g(x) > 0$ for $0 < x < \Delta$ because $g(\Delta) = 0$.
\end{lemma}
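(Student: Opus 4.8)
The plan is to show $g'(x) < 0$ for $0 < x < \Delta$ by isolating the two competing factors. Write $g(x) = C_1 \, \phi(x) + \lambda - C_2 \, \psi(x)$ where $C_1 = (\Delta\lambda+1)^{(\Delta-1)/\Delta}$, $C_2 = ((\Delta+1)\lambda+1)^{\Delta/(\Delta+1)}$, $\phi(x) = (x\lambda+1)^{1/x}$, and $\psi(x) = ((x+1)\lambda+1)^{1/(x+1)}$. The key observation is that $\psi(x) = \phi(x+1)$ in the sense that both are instances of the single function $h(t) := (t\lambda+1)^{1/t}$; indeed $\phi(x) = h(x)$ and $\psi(x) = h(x+1)$. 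So the whole problem reduces to understanding the logarithmic derivative of $h$, namely $\frac{h'(t)}{h(t)} = \frac{d}{dt}\left[\frac{\log(t\lambda+1)}{t}\right] = \frac{1}{t}\left(\frac{\lambda}{t\lambda+1} - \frac{\log(t\lambda+1)}{t}\right)$. As already noted in the paper (inequality \eqref{eq:lower-exp-ineq}), $\log(t\lambda+1)/t$ is decreasing, so $h'(t) < 0$ for $t > 0$; I would also record that $h$ is log-convex or extract whatever monotonicity of $h'(t)/h(t)$ is needed.

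Then $g'(x) = C_1 h'(x) - C_2 h'(x+1)$. Since $h' < 0$, we want to compare the magnitudes: $g'(x) < 0$ is equivalent to $C_1 |h'(x)| < C_2 |h'(x+1)|$, i.e. $\frac{|h'(x)|}{|h'(x+1)|} < \frac{C_2}{C_1} = \frac{((\Delta+1)\lambda+1)^{\Delta/(\Delta+1)}}{(\Delta\lambda+1)^{(\Delta-1)/\Delta}}$. Writing $|h'(t)| = h(t)\cdot\left(\frac{\log(t\lambda+1)}{t^2} - \frac{\lambda}{t(t\lambda+1)}\right)$, the ratio $|h'(x)|/|h'(x+1)|$ splits as $\frac{h(x)}{h(x+1)}$ times the ratio of the bracketed quantities. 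Here one uses that $h(x)/h(x+1) > 1$ (again \eqref{eq:lower-exp-ineq}) but is bounded, and that $C_2/C_1$ is itself exactly $h(\Delta+1)^{?}\cdots$ — more precisely, note that $C_2/C_1$ evaluated against the structure at $x = \Delta$ is what makes $g(\Delta) = 0$ work, so the inequality is tight at the endpoint and I expect the monotonicity to come from a clean sign analysis rather than a sharp constant chase.

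An alternative, and probably cleaner, route: rather than differentiating $g$ directly, substitute and reduce to showing that a single-variable auxiliary function is monotonic. Set $u(x) = \log\phi(x) = \frac{\log(x\lambda+1)}{x}$; then $u$ is decreasing and convex (this convexity is the extra input beyond \eqref{eq:lower-exp-ineq}, and follows by a second-derivative computation on $\log(x\lambda+1)/x$, which reduces to checking a concavity statement for $\log(1+s)$-type expressions — routine). Convexity of $u$ gives $u(\Delta) - u(x) \le u'(x)(\Delta - x) \le 0$ and controls $\phi(x)/\phi(\Delta) = e^{u(x)-u(\Delta)}$, after which $g'(x) < 0$ should follow by bounding $\phi'(x) = \phi(x)u'(x)$ and $\psi'(x) = \psi(x)u'(x+1)$ against each other using $u'(x) \le u'(x+1) < 0$ (convexity of $u$) together with $\phi(x) \le C_1^{-1}\cdot(\text{stuff})$. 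In either route the main obstacle is the same: controlling the ratio of derivatives $C_1\phi'(x)$ versus $C_2\psi'(x)$ near the endpoint $x = \Delta$, where $g$ and (one hopes not) $g'$ nearly vanish — so the crux is verifying that $g'(\Delta^-) < 0$ strictly and that no sign change occurs on $(0,\Delta)$. I would handle this by reducing, after clearing the positive factors $\phi(x), \psi(x), C_1, C_2$, to a polynomial-type inequality in $x, \Delta, \lambda$ and the quantities $\log(x\lambda+1), \log((x+1)\lambda+1), \log(\Delta\lambda+1), \log((\Delta+1)\lambda+1)$, then bounding the logarithms via the standard estimates $\frac{s}{1+s} \le \log(1+s) \le s$ applied on the relevant ranges; this is exactly the kind of technical verification the paper defers to the appendix, so I would expect to grind it out case-by-case in $\lambda$ (small versus large) if a uniform argument proves elusive.
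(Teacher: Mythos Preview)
Your setup matches the paper's: write $g(x) = C_1 h(x) + \lambda - C_2 h(x+1)$ with $h(t) = (t\lambda+1)^{1/t}$, differentiate, and compare magnitudes. But there is a sign slip: since $h'<0$ you have $g'(x) = -C_1|h'(x)| + C_2|h'(x+1)|$, so $g'(x)<0$ is equivalent to $C_1|h'(x)| > C_2|h'(x+1)|$, the reverse of what you wrote.

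More importantly, your factorization $|h'(t)| = h(t)\,|u'(t)|$ (with $u=\log h$) does not separate cleanly. Convexity of $u$ does give $|u'(x)| > |u'(x+1)|$, but the remaining factor goes the wrong way: from $g(\Delta)=0$ we get $C_1 h(\Delta) - C_2 h(\Delta+1) = -\lambda < 0$, so $C_1 h(x) < C_2 h(x+1)$ at (and by continuity near) $x=\Delta$. Hence you cannot conclude by bounding the two factors separately, and your fallback---crude log estimates $\tfrac{s}{1+s}\le\log(1+s)\le s$ and a case split in $\lambda$---is not a plan; those bounds are far too lossy to survive the tight endpoint where the inequality becomes an equality.

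The paper resolves this with a different regrouping. Setting $k(t) = (t\lambda+1)^{(t-1)/t}$, one has $C_1 = k(\Delta)$, $C_2 = k(\Delta+1)$, and
\[
C_1|h'(x)| \;=\; \frac{k(\Delta)}{k(x)}\left(\frac{(x\lambda+1)\log(x\lambda+1)}{x^2}-\frac{\lambda}{x}\right),
\quad
C_2|h'(x+1)| \;=\; \frac{k(\Delta+1)}{k(x+1)}\cdot(\text{same at }x+1).
\]
Now \emph{both} comparisons go the right way: the bracketed quantity is nonincreasing in $t$ (one derivative check), and $k(\Delta)/k(x) > k(\Delta+1)/k(x+1)$ because $k$ is strictly log-concave (another derivative check). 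Transferring the factor $(t\lambda+1)$ from $h(t)$ into the bracket is exactly what converts the bad comparison $C_1 h(x) < C_2 h(x+1)$ into the good one $k(\Delta)/k(x) > k(\Delta+1)/k(x+1)$; this regrouping is the idea your sketch is missing.
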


\begin{proof}
We need $g'(x) < 0$. We have
\begin{align*} 
g'(x)&=\frac{(\Delta\lambda+1)^\frac{\Delta-1}{\Delta}}{(x\lambda+1)^\frac{x-1}{x}}\Bigg(\frac{\lambda}{x}-\frac{(x\lambda+1)(\log(x\lambda+1))}{x^2}\Bigg)\\
&\qquad -\frac{((\Delta+1)\lambda+1)^\frac{\Delta}{\Delta+1}}{((x+1)\lambda+1)^\frac{x}{x+1}}\Bigg(\frac{\lambda}{x+1}-\frac{((x+1)\lambda+1)\log((x+1)\lambda+1)}{(x+1)^2}\Bigg).
\end{align*}
We have $\frac{(x\lambda+1)\log(x\lambda+1)}{x^2}-\frac{\lambda}{x} > 0$ and $\frac{((x+1)\lambda+1)\log((x+1)\lambda+1)}{(x+1)^2}-\frac{\lambda}{x+1} > 0$ since they both follow from the inequality $(y + 1)\log (y + 1) - y > 0$ for $y > 0$, equivalent to $\log\left(\frac{1}{y + 1}\right) < \frac{1}{y + 1} - 1$. Thus it suffices to prove that
\begin{equation}\label{eq:a11}
\frac{(x\lambda+1)\log(x\lambda+1)}{x^2}-\frac{\lambda}{x}\ge \frac{((x+1)\lambda+1)\log((x+1)\lambda+1)}{(x+1)^2}-\frac{\lambda}{x+1}	
\end{equation}
and 
\begin{equation}\label{eq:a12}
\frac{(x\lambda+1)^\frac{x-1}{x}}{((x+1)\lambda+1)^\frac{x}{x+1}} < \frac{(\Delta\lambda+1)^\frac{\Delta-1}{\Delta}}{((\Delta+1)\lambda+1)^\frac{\Delta}{\Delta+1}}.
\end{equation}
The inequality \eqref{eq:a11} follows as the function 
\[h(x)=\frac{(x\lambda+1)\log(x\lambda+1)}{x^2}-\frac{\lambda}{x}\] 
is nonincreasing, as
\[h'(x)=\frac{2\lambda x-(\lambda x+2)\log(\lambda x+1)}{x^3}\le 0,\]
where we used $(y + 2)\log (y + 1) - 2y\ge 0$ for $y\ge 0$, which is true since it is true at $y = 0$ and its derivative is $\frac{y + 2}{y + 1} + \log (y + 1) - 2 = \frac{1}{y + 1} - 1 - \log\left(\frac{1}{y + 1}\right)\ge 0$.

The inequality \eqref{eq:a12} follows by proving that the function 
\[k(x)=(x\lambda +1)^\frac{x-1}{x}\]
is strictly log-concave on $(0, \infty)$. This reduces to showing that
\[\frac{d^2}{dx^2}[\log k(x)]=\frac{x\lambda(2 + 3x\lambda - x^2\lambda)-2(x\lambda+1)^2\log(x\lambda+1)}{x^3(x\lambda+1)^2} > 0,\]
which is equivalent to
\[
	\frac{x\lambda(2 + 3x\lambda - x^2\lambda)}{2(x\lambda+1)^2} < \log (x\lambda + 1)
\] 
for $x > 0$. This is true since there is equality at $x = 0$ and
\[\frac{d}{dx}\left[\log (x\lambda + 1)-\frac{x\lambda(2 + 3x\lambda - x^2\lambda)}{2(x\lambda+1)^2}\right] = \frac{x^2\lambda^2(\lambda (x + 2)+3)}{2(x\lambda+1)^3} > 0. \qedhere\]
\end{proof}

\begin{lemma}\label{lem:f-increasing-2}
For $\beta\ge\alpha > 0$ and $\lambda,\mu \ge 0$. Then the function
\[f(x)=\frac{((1+\mu)^\alpha+(1+\lambda)^x-1)^{\frac{1}{\alpha x}}}{((1+\mu)^{\beta}+(1+\lambda)^x-1)^{\frac{1}{\beta x}}}\]
is nondecreasing on $(0, \infty)$. 
\end{lemma}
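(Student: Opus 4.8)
The plan is to take logarithms and peel the statement down to a two-variable calculus inequality that, in the end, rests only on $\log r\le r-1$. First dispose of the degenerate cases: if $\alpha=\beta$, or $\lambda=0$, or $\mu=0$, then $f$ is constant, so assume $\beta>\alpha>0$ and $\lambda,\mu>0$. Write $A=(1+\mu)^\alpha-1>0$ and $B=(1+\mu)^\beta-1>A$, and for a fixed $x>0$ set $s=(1+\lambda)^x>1$, so that $(1+\mu)^\gamma+(1+\lambda)^x-1=\bigl((1+\mu)^\gamma-1\bigr)+s$ and $x\log(1+\lambda)=\log s$. A direct differentiation then gives
\[
\frac{d}{dx}\log f(x)=\frac{1}{x^2}\bigl(R_\beta(B)-R_\alpha(A)\bigr),\qquad\text{where } R_\gamma(c):=\frac1\gamma\Bigl(\log(c+s)-\frac{s\log s}{c+s}\Bigr).
\]
So it suffices, for each fixed $s>1$, to prove $R_\alpha(A)\le R_\beta(B)$; since $\alpha\le\beta$, this will follow once I show that $\gamma\mapsto R_\gamma\bigl((1+\mu)^\gamma-1\bigr)$ is nondecreasing on $(0,\infty)$.

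Writing $a=a(\gamma)=(1+\mu)^\gamma-1$ and $K(a)=\log(a+s)-\tfrac{s\log s}{a+s}$, we have $R_\gamma(a(\gamma))=K(a(\gamma))/\gamma$, and since $\gamma\cdot a'(\gamma)=\gamma(1+a)\log(1+\mu)=(1+a)\log(1+a)$, differentiating in $\gamma$ reduces the monotonicity to the assertion that
\[
(1+a)\log(1+a)\,K'(a)\ \ge\ K(a)\qquad\text{for all }a\ge0,\ s\ge1,
\]
with $K'(a)=\frac{1}{a+s}+\frac{s\log s}{(a+s)^2}$. This is now a clean two-variable inequality. Using the antiderivative identity $\frac{d}{dt}(t\log t)=\log t+1$ to write $(a+s)\log(a+s)-s\log s=\int_0^a(1+\log(s+v))\,dv$ and likewise $(1+a)\log(1+a)=\int_0^a(1+\log(1+v))\,dv$, one clears the denominator $(a+s)^2$ and cancels a common term to see that the displayed inequality is equivalent to
\[
(1+a)\log(1+a)\cdot s\log s\ \ge\ (a+s)\int_0^a\log\frac{s+v}{1+v}\,dv.
\]

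Finally this follows from two elementary estimates, each an immediate consequence of $\log r\le r-1$. First, for $s\ge1$ and $v\ge0$ one has $(s+v)\log\tfrac{s+v}{1+v}\le s\log s$: the two sides agree at $v=0$, and the derivative of the difference has the sign of $\frac{s+v}{1+v}-1-\log\frac{s+v}{1+v}\ge0$. Integrating the resulting pointwise bound $\log\tfrac{s+v}{1+v}\le\tfrac{s\log s}{s+v}$ over $v\in[0,a]$ gives $\int_0^a\log\tfrac{s+v}{1+v}\,dv\le s\log s\cdot\log\tfrac{a+s}{s}$. Second, $T(s):=(a+s)\log\tfrac{a+s}{s}$ satisfies $T'(s)=\log\tfrac{a+s}{s}+1-\tfrac{a+s}{s}\le0$, so $T$ is nonincreasing and $T(s)\le T(1)=(1+a)\log(1+a)$ for $s\ge1$. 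Multiplying these two facts yields $(a+s)\int_0^a\log\tfrac{s+v}{1+v}\,dv\le s\log s\cdot T(s)\le s\log s\cdot(1+a)\log(1+a)$, which is precisely the inequality needed (the case $\log s=0$ being trivial).

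I expect the \emph{main obstacle} to be locating the right intermediate reduction rather than any single computation. Naive routes fail: since $\log f(x)=\phi(x)/x$ with $\phi(0)=0$ one is tempted to prove $\phi$ convex, but $\phi''$ genuinely changes sign; and crude bounds on $\log\tfrac{s+v}{1+v}$ — by $\log s$, or by linearizing through $\log(1+v/s)\le v/s$ — lose too much near $s=1$, exactly where the target inequality degenerates to equality. The integral representation in the third paragraph together with the sharp pointwise bound $\log\tfrac{s+v}{1+v}\le\tfrac{s\log s}{s+v}$ is what makes the pieces line up, after which everything collapses to $\log r\le r-1$.
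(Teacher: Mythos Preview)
Your proof is correct. The reduction is the same as the paper's: both differentiate $\log f$, arrive at $\frac{d}{dx}\log f(x)=\frac{1}{x^2}(R_\beta(B)-R_\alpha(A))$, and reduce to showing that $\gamma\mapsto R_\gamma\bigl((1+\mu)^\gamma-1\bigr)$ is nondecreasing. In fact your two-variable inequality $(1+a)\log(1+a)\,K'(a)\ge K(a)$, after multiplying by $(a+s)$, is literally the paper's inequality $r(s,1+a)\ge 0$, where
\[
r(x,y)=x\log x+y\log y+\tfrac{xy}{x+y-1}\log x\log y-(x+y-1)\log(x+y-1).
\]
Where you diverge is in proving $r\ge 0$. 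The paper rewrites $r(x,y)\ge 0$ as $q(x)q(y)\ge q(1)q(x+y-1)$ for $q(t)=1+\tfrac{t}{x+y-1}\log t$, then fixes $x+y$ and shows $\log q(c-\alpha)+\log q(c+\alpha)$ is nonincreasing in $\alpha$ by a somewhat involved derivative-and-endpoint computation. Your route---the integral representation $(a+s)\log(a+s)-s\log s-(1+a)\log(1+a)=\int_0^a\log\tfrac{s+v}{1+v}\,dv$ followed by the sharp pointwise bound $\log\tfrac{s+v}{1+v}\le\tfrac{s\log s}{s+v}$ and the monotonicity of $T(s)=(a+s)\log\tfrac{a+s}{s}$---is more transparent: every step is a direct consequence of $\log r\le r-1$, and the integral viewpoint makes the cancellation explicit rather than buried in derivative signs. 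The paper's multiplicative rewriting $q(x)q(y)\ge q(1)q(x+y-1)$ has some conceptual appeal (it hints at a log-concavity along lines of fixed $x+y$), but your argument is shorter and requires no auxiliary endpoint checks.
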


\begin{proof}
Let
\[
	r(x, y) := x\log x+y\log y+\frac{xy}{x+y-1}\log x\log y - (x+y-1)\log (x+y-1).
\]
Let us show that, for $x,y \ge 1$, we have $r(x, y)\ge 0$, which is equivalent to
\[q(x)q(y)\ge q(1)q(x+y-1)\]
where $q(t)=1+\frac{t}{x+y-1}\log t$. Consider the function 
\[
p(\alpha)=\log q(c-\alpha)+\log q(c+\alpha),
\]
where $c=\frac{x+y}{2}$. It suffices to check that $p'(\alpha)$ is nonpositive when $\alpha\in [0, c-1]$, since, upon taking logs, the left side is at $\alpha = \frac{|x - y|}{2}$ and the right side is at $\alpha = \frac{x + y}{2} - 1$, which is larger since $x, y\ge 1$.

Taking derivative, we have
\[p'(\alpha)=\frac{(c-\alpha-1)\log(c+\alpha)-\log(c-\alpha)(c+\alpha-1)-2\alpha\log (c+\alpha)\log (c-\alpha)}{((c-\alpha)\log(c-\alpha)+2c-1)((c+\alpha)\log(c+\alpha)+2c-1)}.\]
Since $c\ge 1$ and $\alpha\in [0, c-1]$, it follows that the denominator in this expression is positive. Furthermore, for $\alpha>0$ the numerator is at most
\[\gamma(\alpha)=(c-\alpha-1)\log (c+\alpha)-(c+\alpha-1)\log (c-\alpha).\]
Its second derivative is 
\[
\gamma''(\alpha)=\frac{2\alpha(5c^2-\alpha^2-2c)}{(c+\alpha)^2(c-\alpha)^2}>0,
\]
since $c^2>\alpha^2$ and $c^2\ge c$. Hence, in order to verify $\gamma(\alpha)\le 0$ for $0\le \alpha\le c-1$, it suffices to check the endpoints. In fact $\gamma(0)=\gamma(c-1)=0$, so $\gamma$ is indeed nonpositive. Hence $r(x, y)\ge 0$ for $x, y\ge 1$ as required.

Now we return to the inequality stated in the lemma. It suffices to check that $\log f(x)$ is nondecreasing on this interval, since clearly $f$ takes positive values. We have
\[\frac{\partial}{\partial x}\log f(x)=s(x, \alpha)-s(x, \beta)\]
where
\[s(x, t)=\frac{(1+\lambda)^x\log (1+\lambda)}{tx((1+\mu)^t+(1+\lambda)^x-1)}-\frac{\log((1+\mu)^t+(1+\lambda)^x-1)}{tx^2}.\]
Since $\alpha\le\beta$, it suffices to check that the partial derivative $\partial s /\partial t$, which is true since
\[\frac{\partial s(x, t)}{\partial t} =-\frac{r((1+\lambda)^x, (1+\mu)^t)}{x^2t^2((1+\lambda
_1)^x+(1+\mu)^t-1)} \le 0\]
by our inequality $r(x,y) \ge 0$ for all $x,y\ge 1$.
\end{proof}

\begin{lemma}\label{lem:u-v-w-c-2}
Let $c_1, c_2\ge 0$, and let $u, v, w\ge 1$ be positive reals with $1\le u\le w$ and $1\le v\le w$. Then
\begin{multline*}
(1+c_1)^{(v-1)\frac{w}{u}}c_1+\left[(1+c_1)^{v-1}+(1+c_2)^{u}-1\right]^{\frac{w}{u}}
\\ \le \left[(1+c_1)^v+(1+c_2)^u-1\right]^{\frac{w(v-1)}{uv}}((1+c_1)^v+(1+c_2)^w-1)^{\frac{1}{v}}.
\end{multline*}
Equality holds if and only if $v = 1$ or $w = u$ or $c_1c_2 = 0$.
\end{lemma}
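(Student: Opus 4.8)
The plan is to reduce the inequality to a one-variable calculus statement in the exponent, in the spirit of the other appendix lemmas. First I would observe that all three ``base'' quantities appearing are of the form $A := (1+c_1)^{v-1}$, $A' := (1+c_1)^v = A(1+c_1)$, $X := (1+c_2)^u$, and $Y := (1+c_2)^w$, with $A' \ge A \ge 1$, $Y \ge X \ge 1$, and the exponents $w/u \ge 1$ and $(v-1)/v \in [0,1)$. Rewriting, the claimed inequality becomes
\[
A^{w/u}\,c_1 + \bigl(A + X - 1\bigr)^{w/u} \;\le\; \bigl(A(1+c_1) + X - 1\bigr)^{\frac{w(v-1)}{uv}}\bigl(A(1+c_1) + Y - 1\bigr)^{1/v}.
\]
I would then split the argument: handle the ``$c_2$'' direction (i.e.\ the dependence through $X \le Y$) by a H\"older-type step, and the ``$u$ vs.\ $w$'' direction by monotonicity in a single exponent parameter, reducing to the boundary $w = u$ where the inequality should become an equality (as the stated equality conditions suggest).

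Concretely, the key steps in order: \textbf{(1)} Reduce to $w = u$ first. Fix $u, v, c_1, c_2$ and treat the whole expression as a function of $w \ge u$; I expect to show the left side grows no faster than the right side by differentiating $\log$ of both sides in $w$, using that $\frac{d}{dw}(1+c_2)^w = (1+c_2)^w\log(1+c_2)$ and comparing with the structure of Lemma~\ref{lem:f-increasing-2} (whose auxiliary function $r(x,y)\ge 0$ for $x,y\ge 1$ is exactly the kind of two-variable positivity that controls these ratios of $(\text{base}+\text{power}-1)$ terms). \textbf{(2)} At $w = u$ the inequality collapses to
\[
A^{}\,c_1 + (A+X-1) \;\le\; (A(1+c_1)+X-1)^{\frac{v-1}{v}}\bigl(A(1+c_1)+Y-1\bigr)^{1/v},
\]
i.e.\ $A(1+c_1) + X - 1 \le (A(1+c_1)+X-1)^{(v-1)/v}(A(1+c_1)+Y-1)^{1/v}$, which is immediate since $Y \ge X$ forces the second factor's base to dominate the first's. \textbf{(3)} Assemble: the $w$-derivative computation from step (1) shows the gap (right minus left) is nondecreasing in $w$ starting from the value $\ge 0$ established in step (2), giving the lemma for all $w \ge u$. \textbf{(4)} Track equality: equality in step (2) needs $Y = X$, i.e.\ $w = u$ or $c_2 = 0$; equality propagating through step (1) needs $\log(1+c_2) = 0$ or $v = 1$ or the $w$-derivative to vanish identically, which by the strict positivity in Lemma~\ref{lem:f-increasing-2}'s inequality $r(x,y)>0$ for $x,y>1$ forces $c_1 c_2 = 0$ or $v=1$ or $w=u$, matching the claimed conditions.

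The main obstacle I anticipate is step (1): showing the $w$-derivative inequality cleanly, because the left side $A^{w/u}c_1 + (A+X-1)^{w/u}$ is a \emph{sum} of two terms with different bases being raised to the $w/u$ power, so $\log$ of the left side is not a sum and its derivative produces a weighted average of $\frac{1}{u}\log A$ and $\frac{1}{u}\log(A+X-1)$ with weights depending on $w$. I would handle this by the substitution $t = w/u \ge 1$ and rewriting $A^{t}c_1 + (A+X-1)^t$; note $A c_1 = A' - A$, so $A^t c_1 = A^{t-1}(A'-A)$, and I would try to bound $A^t c_1 + (A+X-1)^t \le (A' + X - 1)^t - (\text{something nonnegative})$ via the convexity inequality $(p+q)^t \ge p^t + t\,p^{t-1}q$ (or its reverse for the pieces), converting the problem into one where H\"older's inequality \eqref{eq:upper-holder} applies directly with exponents $\frac{v-1}{v}$ and $\frac1v$. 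If a direct derivative estimate proves too unwieldy, the fallback is to apply H\"older in the form $a^p + b \le (a+b)^p(1+b)^{1-p}$ (already used as \eqref{eq:upper-holder-app} in the main text) to the left side with $a = (1+c_1)^{(v-1)w/u}$, $b = X - 1$ appropriately scaled, reducing everything to the scalar inequality $(1+c_1)^{(v-1)w/u} + X - 1 \le (\cdots)$ which is again a monotonicity-in-$w$ statement with only one exponential base, hence tractable by the $(y+1)\log(y+1) - y > 0$ type estimates used throughout this appendix.
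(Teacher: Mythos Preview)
Your overall architecture---check equality at $w=u$, then show a monotone-in-$w$ statement---is the same shape as the paper's argument (the paper also sets $t=w/u$, observes equality at $t=1$, and shows a derivative is negative for $t>1$). Step~(2) is fine: at $w=u$ both sides equal $A(1+c_1)+X-1$, since $Y=X$ there.

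The gap is step~(1), and it is not a technicality you can wave at Lemma~\ref{lem:f-increasing-2}. Differentiating $\log L(w)$, where $L(w)=A^{w/u}c_1+(A+X-1)^{w/u}$, gives $\tfrac{1}{u}$ times a \emph{moving} convex combination of $\log A$ and $\log(A+X-1)$, with weights that depend on $w$; there is no clean comparison of this to $(\log R)'$, and the auxiliary function $r(x,y)$ from Lemma~\ref{lem:f-increasing-2} governs a different ratio structure and does not apply here. Your fallback H\"older $a^p+b\le(a+b)^p(1+b)^{1-p}$ is pointed in the right direction but is not the decomposition that works. The paper's key move---which you are missing---is a specific H\"older split of the left side with exponents $\tfrac{1}{v}$ and $\tfrac{v-1}{v}$, engineered so that the $\tfrac{1}{v}$-factor is \emph{exactly} $\bigl((1+c_1)^v+(1+c_2)^w-1\bigr)^{1/v}$, i.e.\ the second factor on the right. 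Concretely one writes
\[
(1+c_1)^{(v-1)\frac{w}{u}}c_1=\bigl[c_1(1+c_1)^{v-1}\bigr]^{1/v}\bigl[c_1(1+c_1)^{wv/u-1}\bigr]^{(v-1)/v}
\]
and similarly for the second summand, so that after H\"older the $1/v$-part sums to $c_1(1+c_1)^{v-1}+(1+c_1)^{v-1}+(1+c_2)^w-1=(1+c_1)^v+(1+c_2)^w-1$. This cancels the awkward $w$-dependent factor on the right and reduces the lemma to a scalar inequality in $t=w/u$ alone. Even then the monotonicity is delicate: the paper needs both $F'(1)<0$ (via a Jensen argument on $x\mapsto\log(e^x+b)$) and the existence of a positive multiplier $M(t)$ making $M(t)F'(t)$ nonincreasing. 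Your proposal does not locate either ingredient, and without the $\tfrac{1}{v},\tfrac{v-1}{v}$ H\"older step the problem does not reduce to a one-variable statement at all.
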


\begin{proof}
When $v=1$, equality holds since both sides evaluate to $(1+c_2)^w+c_1$. Similarly, if $c_1=0$, then both sides evaluate to $(1+c_2)^w$ and if $c_2=0$ then both sides evaluate to $(1+c_1)^{\frac{w(v-1)}{u}+1}$. Hence, we will assume $v>1$ and $c_1,c_2>0$ from now on.

Applying H\"{o}lder's inequality to the left-hand side, we get
\begin{align*}
    &(1+c_1)^{(v-1)\frac{w}{u}}c+((1+c_1)^{v-1}+(1+c_2)^{u}-1)^{\frac{w}{u}}
    \\&= \left[c_1(1+c_1)^{v-1}\right]^{\frac{1}{v}}\left[c_1(1+c_1)^{\frac{wv}{u}-1}\right]^{\frac{v-1}{v}}
    \\ &\qquad +\left[(1+c_2)^w+(1+c_1)^{v-1}-1\right]^{\frac{1}{v}}\left[\left[\frac{\left[(1+c_1)^{v-1}+(1+c_2)^u-1\right]^{\frac{vw}{u}}}{(1+c_1)^{v-1}+(1+c_2)^w-1}\right]^{\frac{1}{v-1}}\right]^{\frac{v-1}{v}}
    \\ &\le \left[(1+c_2)^w+(1+c_1)^{v}-1\right]^{\frac{1}{v}}\left[c_1(1+c_1)^{\frac{wv}{u}-1}+\left[\frac{\left[(1+c_1)^{v-1}+(1+c_2)^u-1\right]^{\frac{vw}{u}}}{(1+c_1)^{v-1}+(1+c_2)^w-1}\right]^{\frac{1}{v-1}}\right]^{\frac{v-1}{v}}.
\end{align*}
Therefore it suffices to prove that
\[c_1(1+c_1)^{\frac{wv}{u}-1}+\left[\frac{\left[(1+c_1)^{v-1}+(1+c_2)^u-1\right]^{\frac{vw}{u}}}{(1+c_1)^{v-1}+(1+c_2)^w-1}\right]^{\frac{1}{v-1}}\le \left[(1+c_1)^v+(1+c_2)^u-1\right]^{\frac{w}{u}}.\]

Let $a=(1+c_1)^{v-1}, b=(1+c_2)^u-1, t=\frac{w}{u}, c=c_1$. Then upon dividing through by the right hand side, the above inequality can be rewritten as $F_1+F_2\le 1$, where
\[
F_1=\Bigg(\frac{ac}{a+ac+b}\Bigg)^t\Bigg(\frac{1+c}{c}\Bigg)^{t-1} \quad\text{and}\quad 
    F_2= \Bigg(\frac{a+b}{a+ac+b}\Bigg)^t\Bigg(\frac{(a+b)^t}{a+(b+1)^t-1}\Bigg)^{\frac{\log (1+c)}{\log a}}.\]
It suffices to prove that for all $a> 1, b > 0, c>0, t\ge 1$, one has $F_1+F_2\le 1$. Fix $a > 1$, $b > 0$, and $c > 0$. Set
\[
F(t):=\log F_2-\log (1-F_1).
\]
We need to show that $F(t) \le 0$ for all $t \ge 1$ with equality if and only if $t = 1$. We have $F(1) = 0$, so it suffices to check that $F'(t) < 0$ for $t>1$, which follows from the following two facts
\begin{enumerate}
    \item[(A)] $F'(1) <  0$. 
    \item[(B)] There exists a function $M(t)$ which is positive on $(1, \infty)$, and for which $M(t)F'(t)$ is nonincreasing.
\end{enumerate}

We have
\[
F'(t) =\log\left(\frac{a+b}{a+ac+b}\right)+\frac{\log(1+c)\log(a+b)}{\log a}-\frac{\log(1+c)\log(1+b)}{(\log a) \Big(1+\frac{a-1}{(b+1)^t}\Big)}-\frac{\log\Big(\frac{a+ac}{a+ac+b}\Big)}{1-\Big(\frac{a+ac}{a+ac+b}\Big)^{-t}\Big(1+\frac{1}{c}\Big)}.\]

\emph{Proof of (A).}  We have
\[
F'(1)=\log\Big(\frac{a+b}{a+ac+b}\Big)+\frac{\log(1+c)\log(a+b)}{\log a}-\frac{\log(1+c)\log(1+b)(1+b)}{\log a (a+b)}+\frac{\log\Big(\frac{a+ac}{a+ac+b}\Big)ac}{a+b}.\]
Hence $F'(1) < 0$ is equivalent to, upon multiplying through by $(a+b)\log a$, substituting $d=c + 1$, and rearranging,
\[(a+b)\log(ad)\log(a+b)+(ad-a)\log(ad)\log a < (ad+b)\log(ad+b)\log a+(1+b)\log(1+b)\log d.\]
Note that both sides are equal if $b=0$. We claim that the difference ($\mathrm{RHS}-\mathrm{LHS}$) is strictly increasing in $b$. Indeed, upon taking a derivative this is equivalent to
\[\log(ad)\log(a+b) < \log a\log(ad+b)+\log d\log(1+b),\]
which is in turn equivalent to, upon dividing through by $\log(ad)>0$,
\[\log(a+b)\le\frac{\log a}{\log a+\log d}\log(ad+b)+\frac{\log d}{\log a+\log d}(1+b),\]
which follows from Jensen's inequality on the strictly convex function $r(x)=\log(e^x+b)$ and the fact that $ad > 1, b > 0$. This completes the proof of (A).

\medskip

\emph{Proof of (B).} Set $d=c+1$, so that

\[
F'(t) = \log\left(\frac{a+b}{ad+b}\right)+\frac{\log d\log(a+b)}{\log a}-\frac{\log d\log(1+b)}{\log a\cdot \Big(1+\frac{a-1}{(b+1)^t}\Big)}-\frac{\log(\frac{ad+b}{ad})}{\Big(\frac{ad+b}{ad}\Big)^{t}\Big(\frac{d}{d-1}\Big)-1}.\]
Set
\[
M(t) =\Big(\frac{ad+b}{ad}\Big)^t-\frac{d-1}{d}.
\]
We have $M(t) > 0$ for $t > 1$ since $d>1$ and $b>0$. Note that $M'(t)=\log (\frac{ad+b}{ad})(\frac{ad+b}{ad})^t$. We compute:
\begin{align*}
\frac{d}{dt}(M(t)F'(t))&=M'(t) \Bigg(\frac{\log d\log(b+a)}{\log a}+\log\Big(\frac{a+b}{ad+b}\Big)\Bigg)
\\ &\qquad -M'(t)\frac{\log d\log(1+b)}{\log a\cdot \Big(1+\frac{a-1}{(b+1)^t}\Big)}-M(t)\frac{\log d\log^2 (b+1)(a-1)(b+1)^t}{\log a[(b+1)^t+a-1]^2}
\\ &=\log\Big(\frac{ad+b}{ad}\Big)\Big(\frac{ad+b}{ad}\Big)^t\Bigg(\frac{\log d\log(b+a)}{\log a}+\log\Big(\frac{a+b}{ad+b}\Big)\Bigg)
\\ &\qquad - \frac{\log d\log(1+b)}{\log a}\left[\Big(\frac{ad+b}{ad}\Big)^t\log\Big(\frac{ad+b}{ad}\Big)\Bigg(\frac{(b+1)^t}{(b+1)^t+a-1}\Bigg)\right.
\\ &\qquad + \left.\log(b+1)(a-1)\Bigg(\Big(\frac{ad+b}{ad}\Big)^t-\frac{d-1}{d}\Bigg)\frac{(b+1)^t}{\left[(b+1)^t+a-1\right]^2}\right].
\end{align*}

We wish to show this is nonpositive. For this, we first invoke the estimate
\[\frac{\log d\log(b+a)}{\log a}+\log\Big(\frac{a+b}{ad+b}\Big)\le \frac{\log d\log(b+1)}{\log a}.\]
Indeed, this is equivalent to
\[\frac{\log a}{\log a+\log d}\log(ad+b)+\frac{\log d}{\log a+\log d}\log (1+b) \ge \log(a+b),\]
which follows from Jensen's inequality applied to the convex function $x \mapsto \log(e^x+b)$. Using this estimate and dividing through by $\Big(\frac{ad}{ad+b}\Big)^t\frac{\log d\log(b+1)}{\log a}>0$, it suffices to check that
\[
	\log\Bigg(\frac{ad+b}{ad}\Bigg)\Bigg(\frac{a-1}{a-1+(b+1)^t}\Bigg) \le \log(b+1)(a-1)\Bigg(1-\frac{d-1}{d}\Big(\frac{ad}{ad+b}\Big)^t\Bigg)\frac{(b+1)^t}{[(b+1)^t+a-1]^2}.
\]
Multiplying through by $\frac{[(b+1)^t+a-1]^2}{(b+1)^t(a-1)}>0$, this is equivalent to
\[
	\log\Bigg(\frac{ad+b}{ad}\Bigg)\Bigg(1+\frac{a-1}{(b+1)^t}\Bigg)\le \log(b+1)\Bigg(1-\frac{d-1}{d}\Big(\frac{ad}{ad+b}\Big)^t\Bigg).
\]
The left side is decreasing in $t$, while the right side is increasing in $t$. Hence it suffices to check the inequality at $t=1$, which simplifies to
\[\log\Big(\frac{ad+b}{ad}\Big)(ad+b)\le \log(b+1)(b+1).\]
This follows from $ad>1$, along with the fact that the function $x \mapsto (x+b)\log(\frac{x+b}{x})$ is decreasing as the derivative is $\log(1+\frac{b}{x})-\frac{b}{x}\le 0$. This completes the proof of (B), and hence the proof of the lemma.

Tracing out the equality conditions, we saw that in the case that $c_1, c_2 > 0$ and $v > 1$, equality holds exactly when $t = 1$, that is, $w = u$.
\end{proof}

\end{document}